\documentclass[10pt]{article}

\usepackage[utf8]{inputenc}
\usepackage[T1]{fontenc}
\usepackage[english]{babel}
\usepackage[a4paper,text={15cm,23cm},centering]{geometry}
\usepackage{amsmath,amsthm}
\usepackage{libertinus}
\usepackage[libertine]{newtxmath}
\usepackage[varqu,varl,scaled=0.95]{zi4}
\usepackage[protrusion=true,expansion=true]{microtype}
\usepackage{enumitem,xcolor}
\usepackage[explicit]{titlesec}
\titleformat{\section}{\normalfont\sffamily\large\bfseries}{\thesection}{0.8em}{#1}
\titleformat{name=\section,numberless}{\normalfont\sffamily\large\bfseries}{}{0em}{#1}
\titleformat{\paragraph}[runin]{\normalfont\sffamily\bfseries}{}{0em}{#1}
\titlespacing*{\section}{0pt}{2.2ex plus .6ex minus .2ex}{1.1ex plus .2ex}
\titlespacing*{\paragraph}{0pt}{1.4ex plus .4ex minus .2ex}{0.6em}
\usepackage{fancyhdr}
\definecolor{linkblue}{rgb}{0.10,0.25,0.55}
\usepackage[colorlinks,linkcolor=linkblue,citecolor=linkblue,urlcolor=linkblue]{hyperref}

\newtheorem{theorem}{Theorem}[section]
\newtheorem{lemma}[theorem]{Lemma}

\theoremstyle{definition}
\newtheorem{definition}[theorem]{Definition}
\newtheorem{question}[theorem]{Question}
\theoremstyle{remark}

\setlist[itemize]{parsep=0pt,topsep=2pt}
\setlist[enumerate]{parsep=0pt,topsep=2pt}

\newcommand{\N}{\mathbb{N}}
\newcommand{\Prop}{\mathsf{Prop}}
\newcommand{\Type}{\mathsf{Type}}
\newcommand{\ZF}{\mathrm{ZF}}
\newcommand{\ZFC}{\mathrm{ZFC}}
\newcommand{\CIC}{\mathrm{CIC}}
\newcommand{\EM}{\mathrm{EM}}
\newcommand{\Con}{\mathrm{Con}}
\newcommand{\rk}{\operatorname{rank}}
\newcommand{\Acc}{\mathsf{Acc}}
\newcommand{\V}{\mathbb{V}}
\newcommand{\la}{\mathsf{a}}
\newcommand{\lb}{\mathsf{b}}
\newcommand{\lc}{\mathsf{c}}
\newcommand{\lean}[1]{\texttt{#1}}
\newcommand{\guard}[2]{\{#1\mid #2\}}

\begin{document}

\title{\sffamily\bfseries CIC + EM $\boldsymbol{\vdash}$ Con(ZF):\\[0.3ex]
  \normalfont\sffamily\large the consistency of ZF in type theory with excluded middle and no choice}
\author{Mario Carneiro\\[0.5ex]
  \small Chalmers University of Technology and University of Gothenburg\\
  \small Gothenburg, Sweden\\
  \small\href{mailto:marioc@chalmers.se}{\texttt{marioc@chalmers.se}}}
\date{\today}
\maketitle

\begin{abstract}
The sets-as-trees interpretation of set theory in a dependent type
theory with an impredicative universe of propositions validates
Zermelo set theory, and it validates Replacement if the type theory
has a choice or description operator, which turns a functional relation
into a function. It has been natural to expect that without such an
operator the strength of the type theory drops well below that of $\ZF$. We
show that it does not. In the type theory of Lean with two predicative
universes, from excluded middle as the only assumption and with no
axiom (no choice, no propositional extensionality, no quotients), we
prove the consistency of $\ZF$, stated outright for a first-order proof
system. The proof is formalized. The mechanism is the large elimination
of the accessibility predicate over a type as large as the type of
sets: a recursion on accessibility whose recursive calls are guarded by
propositions, and whose later calls are indexed by the \emph{value} of
an earlier call, computes as a term any ordinal that is specified by a
proposition through a well-founded tree of a certain shape. We give a
rule that produces such a tree for every ordinal, unless some $V_\rho$ is
already a model of $\ZF$; the rule does not choose a cofinal map into a
limit ordinal but takes all definable ones at once. In the first case
the sets-as-trees satisfy Replacement for arbitrary propositional
relations. Either way $\ZF$ has a model. Finally, the double negation of
excluded middle suffices, and what remains of it is exactly that
membership is not not well-founded in the stable reading of sets; this
in turn implies the double negation of Markov's principle.
\end{abstract}

\section{Introduction}

\paragraph{The question.}
Consider a dependent type theory with an impredicative, proof-irrelevant
universe $\Prop$, predicative universes $\Type_0:\Type_1:\cdots$,
inductive types, and an accessibility predicate $\Acc$ in $\Prop$ that
eliminates into every universe: the Calculus of Inductive Constructions
in the form implemented by Lean \cite{lean4,carneiro19}. How strong is
it, as a foundation, when excluded middle is assumed but no choice
principle is?

With choice the answer is known. Aczel's interpretation of sets as
well-founded trees \cite{aczel78} gives a type $\V$ of sets in which
the axioms of Zermelo set theory hold, by impredicativity of $\Prop$ for
Separation and Power set. Replacement asks for the image of a set $s$
under a functional relation $\varphi$. A set in $\V$ is a tree
$\mathsf{sup}\,A\,f$ with $A$ a small type and $f:A\to\V$ a
\emph{function}, so to form the image one needs a function $a\mapsto y_a$
with $\varphi(a,y_a)$, and a functional relation in $\Prop$ does not
provide one, because propositions do not eliminate into types. A choice
or description operator closes the gap, and with it the type theory
with $n+2$ universes interprets $\ZFC$ with $n$ inaccessible cardinals
\cite{werner97,barras10,carneiro19,kirstsmolka18,kirstsmolka19}. Without it, the gap between a
functional relation and a function looks like exactly the gap between
Zermelo set theory and $\ZF$, and the standard set-theoretic model, in
which the universes are $V_\kappa$ for inaccessible $\kappa$, does not tell
whether the inaccessibles are needed.

\paragraph{Unique choice.}
The gap is the principle of unique choice: a proof of
$\forall x{\in}s\,\exists!y.\ \varphi(x,y)$ should yield a function, and
whether it does depends on how propositions are read. In Martin-L\"of
type theory, where propositions are types, the existential is a
$\Sigma$-type, a proof of $\forall x\,\exists y\,\varphi$ already is a function,
and Aczel's model validates Replacement and even Strong Collection
\cite{aczel78}; but there is no impredicative universe, and the
strength of these type theories is far below that of $\ZF$
\cite{griffor94}. In homotopy type theory the propositions are the types
with at most one element, and unique choice is provable, because the
type of witnesses of a unique existence is itself a proposition. The
cumulative hierarchy there satisfies Replacement, and with excluded
middle it is a model of $\ZF$: the proof of \cite[Theorem~10.5.11]{hottbook}
uses choice only to obtain excluded middle (see also \cite{gylterud18}
for the same type built from $W$-types and univalence). In the Calculus
of Inductive Constructions, propositions form an impredicative universe
that does not eliminate into types, and Rocq and Lean provide unique
choice only as an axiom, of description or of choice; this is what the
models cited above assume. Without it the strength can be low: Martin-L\"of
type theory with one universe, Aczel's type of sets, and an impredicative,
proof-irrelevant type of propositions has the strength of Zermelo set
theory together with the existence of $V_\tau$ for every $\tau$ below the
Bachmann--Howard ordinal \cite{rathjen12}. That type theory has neither
excluded middle nor propositions that eliminate into types.

\paragraph{The result.}
They are needed, in the sense that the type theory with two universes
and excluded middle, without choice, already proves the consistency of
$\ZF$.

\begin{theorem}[formalized]\label{thm:main}
In the type theory of Lean~4, using the universes $\Prop$, $\Type_0$ and
$\Type_1$ and no axioms, excluded middle implies $\Con(\ZF)$, where $\ZF$
is the usual first-order theory with the schemas of Separation and
Replacement and $\Con$ is non-derivability of falsity in a Hilbert
system. More precisely, excluded middle implies that one of the
following holds.
\begin{enumerate}
\item Every functional relation $\varphi:\V\to\V\to\Prop$ on a set $s:\V$ has
  an image in $\V$. Then $\V$ is a model of $\ZF$, in fact of its
  second-order form.
\item There is an ordinal $\rho>\omega$ of $\V$, a limit, such that $V_\rho$ is
  closed under the images of functions definable over $V_\rho$ with
  parameters. Then $V_\rho$ is a model of $\ZF$.
\end{enumerate}
\end{theorem}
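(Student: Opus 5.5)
The plan is to argue by excluded middle on alternative 1 and to use the large elimination of $\Acc$ to force alternative 2 whenever alternative 1 fails. The abstract already fixes the shape. The ingredients are a recursion on accessibility whose recursive calls are guarded by propositions, and a rule producing a well-founded ``specification tree'' for each ordinal. Given these, the remaining work is to show that every ordinal specified by a proposition through such a tree can be computed as an actual term of $\V$. The final step, passing from a model to $\Con(\ZF)$, is routine. A model in $\V$ of a first-order theory, with satisfaction defined by recursion on formulas into $\Prop$, yields soundness of the Hilbert system by induction on derivations. So I only have to produce a model: either $\V$ itself or some $V_\rho$.

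First I set up $\V$ as Aczel trees in $\Type_1$ over indices in $\Type_0$, with extensional equality and membership, and check that all $\ZF$ axioms other than Replacement hold. Extensionality, Pairing, Union and Infinity are direct. Separation and Power set come from impredicativity of $\Prop$, and Foundation comes from induction on trees. I then develop von Neumann ordinals in $\V$ and the stages $V_\alpha$, using classical logic freely. The key technical lemma I aim for is the following. Suppose $P:\V\to\Prop$ is a property of ordinals, and there is a well-founded tree of propositions describing it, in which a node's children may be indexed by the value computed at an earlier sibling. Then from a proof that this tree is $\Acc$, large elimination produces a term $\alpha:\V$ with $P(\alpha)$. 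The construction recurses on $\Acc$. At a node guarded by a proposition $p$, excluded middle lets me decide $p$ propositionally, but I cannot eliminate on that. Instead I form a set by Separation over the result of the recursive calls, collecting all candidates that satisfy the guard, and take a union or supremum. Unions are insensitive to the missing witness, so no choice is needed: the ``value'' is the supremum of all admissible values.

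Next comes the rule producing the tree for every ordinal, under the hypothesis that no $V_\rho$ is a model. Suppose alternative 1 fails, and fix a functional relation $\varphi$ on $s$ with no image. Assume, towards a contradiction with alternative 2 failing, that every limit $\rho>\omega$ has a definable-with-parameters function on some $a\in V_\rho$ whose image is unbounded in $\rho$. I would define the tree for $\rho$ by recursion along the ordinal. The node for a limit $\rho$ has, as children, \emph{all} definable cofinal maps (formula and parameter, a set-sized family since formulas are countable and parameters range over $V_\rho$). Each such map is then treated via its values on $a$, and each value is handled by the tree of the smaller ordinal. Successors and $0$ are trivial. Running this up the ordinal specified by ``the sup of the ranks of the $\varphi$-images of elements of $s$'' computes that ordinal $\beta$ as a term. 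Then $\{y\in V_{\beta+1} \mid \exists x{\in}s.\,\varphi(x,y)\}$, formed by Separation, is an image, contradicting the choice of $\varphi$. Hence either every image exists, in which case $\V$ satisfies Replacement (even the second-order form), or some limit $\rho>\omega$ has $V_\rho$ closed under definable images. In the latter case $V_\rho$ satisfies Replacement for first-order formulas, since definable functional relations over $V_\rho$ on a set $a\in V_\rho$ have bounded image. The other axioms hold by the usual closure of $V_\rho$ at limits above $\omega$.

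The main obstacle is the middle step: stating the accessibility recursion so that later recursive calls may depend on the value of earlier ones, while the only information flowing from propositions to types is through Separation-defined sets. I would first try to encode this as a single $\Acc$ over a relation on pairs consisting of a node and a candidate value in $\V$. Guards are then hypotheses of the relation, and the result at a node is the union over all candidate values certified by its guard. I expect most of the work to go into checking that this union indeed satisfies $P$, which uses the uniqueness built into the specification. A second, subtler worry is bounding the family of definable cofinal maps to a set, which needs a formalized satisfaction predicate for $V_\rho$ internal to $\V$.
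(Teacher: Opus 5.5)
Your outline has the right architecture: the dichotomy, guarded calls, later calls whose labels depend on an earlier call's value, taking all definable cofinal maps instead of choosing one, the model $V_\rho$, and soundness. But the rule you give for limit nodes would fail, and the step that makes the construction work is missing. You let the children of a limit $\rho$ range over all definable cofinal maps with parameters in $V_\rho$. The recursion never sees $\rho$. The labels it can call are built from fixed terms and from values already returned by its own calls, and those values are ordinals below $\rho$. So the issue is not whether the family is set-sized. It must be enumerated from a set the step has already computed, and $V_\rho$ is not such a set.

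The paper fixes this with a distinguished first child $\la$. Its target is $\nu(\eta)$, the \emph{least} ordinal such that some code and domain of rank below it give an unbounded map. It is formed by Separation, and excluded middle shows that $\eta$ is reached from it. The other children are the triples $\langle q,s,x\rangle$ in $D(G)=V_{\rk G+\omega}$, where $G$ is the returned value, and each has as target the rank of the value at $x$. That first target must meet three conditions:
\begin{itemize}
\item it lies below $\eta$;
\item it is large enough that $D(G)$ names an unbounded map;
\item it is determined by $\eta$ alone, since coherence asks for a target unique up to bisimilarity, and with several targets per path the descent argument for accessibility breaks.
\end{itemize}
You never say which earlier call supplies the labels or what its target is, and this minimization is the heart of the rule. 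Two smaller points: $\omega$ is reached from no smaller ordinal and needs its own case, with labels from the fixed set $V_\omega$; and satisfaction need not be internalized in $\V$, because the evaluator is just a proposition.

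Second, you cannot run the rule at the supremum $\beta$ of the ranks of the values of $\varphi$. If $\varphi$ has no image, there is no such set, so the root has no target to apply a rule to. In the paper the root has no target at all. It has one child $\lc\,x$ for each $x$ in a fixed parameter $U=s$. Below that child sits a single uniform family $T_\eta$, taken at the rank $\eta$ of the value at $x$, so nothing is chosen per $x$. The root is accessible because each child with a target is, and one unfolding shows that its value bounds all the ranks.

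Third, the encoding you propose for the main obstacle does not typecheck. If the carrier consists of pairs of a node and a candidate value, the step must build each child's value as a term in order to call it, which is exactly what the recursion is meant to compute. Moreover, a union over all candidate values is indexed by a type in $\Type_1$, so it is not an element of $\V$. In the paper the carrier is paths of labels, and the targets are never data; they enter only through the relation $R_\tau$. Accessibility is proved rather than assumed, by $\in$-induction on the target, using that targets decrease along the tree.

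Finally, a guard cannot be formed by Separation over the result of a call, because the call needs a proof of the guard as its argument. The guard is the union $\bigcup_{h:P}t\,h$ indexed by the proofs of $P$.
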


By G\"odel's second incompleteness theorem, $\ZF$, if consistent, does
not prove the consistency of this type theory, with or without choice,
so no model construction carried out in $\ZF$ can succeed for it.

\paragraph{The mechanism.}
Propositions do not eliminate into types, with one family of exceptions:
inductive propositions with at most one constructor whose arguments are
propositions or are determined by the indices. Accessibility is the
important one. For a relation $R$ on a type $X$, $\Acc_R\,x$ has the one
constructor $(\forall y.\ R\,y\,x\to\Acc_R\,y)\to\Acc_R\,x$, and its eliminator
defines a function on the accessible elements of $X$ with values in any
type, by recursion on $R$. This is what makes well-founded recursion
available, and it is known to extract information from propositions:
the standard library of Rocq obtains a witness of $\exists n.\,P\,n$, for
decidable $P$ on $\N$, by recursion on a proof of an inductive
proposition of the same kind \cite{constructiveepsilon}. That is unbounded search, and it needs the
decidability of $P$ to know when to stop.

We use the same idea transfinitely, with the type of sets itself as the
type of values, and two observations replace decidability.
\begin{itemize}
\item \emph{Guards.} For a proposition $P$ and $t:P\to\V$ there is a set
  $\guard{t}{P}$ which is $t$ if $P$ holds and empty otherwise, namely
  the union of the family $t$ indexed by the proofs of $P$. No decision is made:
  the index type of the tree is $P$ itself.
\item \emph{Adaptive calls.} A step of the recursion may call the
  recursion at a child, obtain a set $G$ as the value, and then call it
  at further children indexed by the elements of a set computed from
  $G$. The index types of sets are small, but there is no bound on the
  sets that occur as values, so the trees that such a recursion explores
  are not bounded in advance by anything the recursion starts from.
\end{itemize}
The \emph{shape} of the tree explored by the recursion, that is, which
guards hold, is given by a relation $R$ on paths which is an arbitrary
proposition. If $R$ is the tree of a well-founded specification of an
ordinal $\eta$, in a sense made precise below, then the root is accessible
and the value of the recursion at the root is $\eta$: an ordinal that was
only specified becomes the value of a term
(Lemma~\ref{lem:mat}). Applied to the ranks of the values of a
functional relation on a set $s$, with one child of the root for each
element of $s$, the value at the root bounds all the ranks, and the
image is obtained by Separation (Theorem~\ref{thm:repl}).

What remains is to specify every ordinal by such a tree, uniformly,
because the trees for the different elements of $s$ must be given by
one proposition. At a limit ordinal $\eta$ the children must have targets
cofinal in $\eta$ and be indexed by a set of rank below $\eta$. We cannot
choose a cofinal map. Instead the children are indexed by \emph{all}
triples of a first-order definition with parameters, a domain, and an
argument, such that the definition gives over $V_\eta$ a function on the
domain with values of unbounded rank; the target of a child is the rank
of the value at the argument. This fails at $\eta$ exactly if no definable
function from a set in $V_\eta$ is unbounded in $V_\eta$, and then $V_\eta$ is a
model of $\ZF$ (Section~\ref{sec:rule}).

\paragraph{The carrier must be big.}
The recursion runs on paths whose labels are arbitrary sets, so its
carrier lives in the same universe $\Type_1$ as $\V$. This is essential.
On a carrier $X$ in $\Type_0$ the height of any accessible element is
below the Hartogs number of the power set of $X$, which is an ordinal of
$\V$ obtained by a term without Replacement, so the tree is bounded in
advance.
We do not know the strength of the type theory in which accessibility is
restricted to carriers in the lowest universe
(Question~\ref{q:small}).

\paragraph{Formalization.}
The development is in Lean~4 without Mathlib, about 2000 lines, and
is described in Section~\ref{sec:lean}; it is available at
\url{https://github.com/digama0/ConZF}, and the version described here
is the tag \lean{arxiv-v1}. The statement of the main
theorem is
\begin{verbatim}
theorem PSet.con_ZF (em : forall p : Prop, p \/ Not p) : Con ZF
\end{verbatim}
and Lean reports that it depends on no axioms. Everything in
Sections~\ref{sec:sets} to~\ref{sec:zf} is formalized; the text follows
the formal development and names the corresponding declarations.
Section~\ref{sec:em} reports on removing excluded middle.

\section{Sets as trees}\label{sec:sets}

\paragraph{The type theory.}
We use $\Prop$ with its impredicative quantifier, two predicative
universes, inductive types with structural recursion, and $\Acc$ with
elimination into $\Type_1$. The inductive types used are natural numbers,
lists, subtypes, the lifting of a proposition or a small type to a
universe, a three-constructor type of labels, the syntax of formulas,
and the type $\V$ below; all of them are instances of $W$-types and
finite sums. Excluded middle is a hypothesis
$\mathit{em}:\forall p{:}\Prop.\ p\vee\neg p$ of the theorems that need
it. Proof irrelevance, propositional and function extensionality, and
quotients are not used.

\begin{definition}[\lean{PSet}]
$\V:\Type_1$ is the inductive type with the constructor
$\mathsf{sup}:\Pi A{:}\Type_0.\ (A\to\V)\to\V$. For $x=\mathsf{sup}\,A\,f$ we write
$\bar x:=A$ for the \emph{index type} and $x_i:=f\,i$. Bisimulation
$x\approx y$ is defined by recursion:
$\forall i\,\exists j.\ x_i\approx y_j$ and $\forall j\,\exists i.\ x_i\approx y_j$.
Membership is $x\in y:=\exists j.\ x\approx y_j$.
\end{definition}

Bisimulation is an equivalence relation, membership respects it on both
sides, $x\approx y$ iff $x$ and $y$ have the same elements, and
$\in$-induction holds for every predicate on $\V$, whether or not it
respects $\approx$. We never quotient: a \emph{set} is an element of $\V$,
and the statements below are about $\approx$.

Think of $\mathsf{sup}\,A\,f$ as a well-founded tree whose root has one
subtree $f\,a$ for each $a:A$. Two trees are bisimilar if they differ
only in the order and multiplicity of subtrees, hereditarily, and a set
in the usual sense is a tree up to bisimilarity. The index type $\bar x$
is part of the data of $x$: a set is presented as an enumeration of its
elements, indexed by a small type.

\paragraph{Operations.}
All of the following are terms, with the expected membership
conditions. The empty set; $\{A_i\mid i:\iota\}$ for a family indexed
by a small type; $\bigcup x$, and $\bigcup_{i:\iota}A_i$; pairs
$\{x,y\}$, the successor $x\cup\{x\}$, Kuratowski pairs $\langle x,y\rangle$, which
are injective up to $\approx$, and tuples; Separation
$\{z\in x\mid P\,z\}$ for an arbitrary $P:\V\to\Prop$, with index type the
subtype $\{i:\bar x\mid P\,x_i\}$; the power set, with index type
$\bar x\to\Prop$, which is small because $\Prop$ is impredicative;
$\omega$; and, for a proposition $P$ and $t:P\to\V$, the \emph{guard}
\[\guard tP:=\textstyle\bigcup_{h:P}t\,h,\qquad
z\in\guard tP\iff\exists h{:}P.\ z\in t\,h.\]
The guard is a set-valued `if $P$ then $t$ else $\emptyset$' whose condition is
never evaluated: when $P$ is false the union is over an empty index
type. It is the one way in which a proposition enters a set without
being decided, and it is used throughout.

So $\V$ satisfies Zermelo set theory with Foundation, with Separation
for all propositional predicates. With excluded middle, ordinals
(transitive sets of transitive sets) are linearly ordered by $\in$. The
rank $\rk x:=\bigcup_i(\rk x_i\cup\{\rk x_i\})$ is a term, it is an
ordinal, and $\rk\eta\approx\eta$ for an ordinal $\eta$. The levels are given by
the term $V_x:=\bigcup_i\mathcal P(V_{x_i})$, defined for every set
$x$, with $y\in V_x$ iff $\rk y\in\rk x$. Note that $\rk$ and $V$ are
functions because they are defined by recursion on \emph{data}. What
$\V$ lacks is any way to obtain a set from a description of it. A
function into $\V$ must build the index type and the enumeration of its
result, and a proof that exactly one set $y$ satisfies $\varphi(x,y)$ contains
no such enumeration, since propositions do not eliminate into types.
This is why Replacement fails to be evident, and the next section is
about the one exception we have.

\section{The materializing recursion}\label{sec:mat}

This section and the next two carry out the plan of the introduction.
Here a well-founded specification of an ordinal is turned into a term;
Section~\ref{sec:repl} uses this to prove Replacement from a supply of
specifications; and Section~\ref{sec:rule} produces the specifications.

\paragraph{The idea.}
Suppose an ordinal $\eta$ is given only by a proposition, and we want a
term of type $\V$ that is bisimilar to it. An ordinal is the set of the
smaller ordinals, but it is equally the union of the successors
$\xi\cup\{\xi\}$ over any set of ordinals $\xi$ below $\eta$ that is cofinal in
$\eta$, or that contains the predecessor if $\eta$ is a successor. So $\eta$
could be computed by recursion, if a node standing for $\eta$ could name
some children whose values are cofinal below $\eta$, compute those values
recursively, and return the union of their successors. The values need
not be all of $\eta$'s elements, only enough of them. The difficulty is
the naming: the recursion calls itself on the children, so they must be
data, and the recursion must produce the names without knowing $\eta$.

Take a natural number $n$ first. The node makes one call, at a child
named $\la$, and returns the successor of the value. Whether the child
should exist, that is, whether $n>0$, is a proposition that the
recursion cannot decide. So the shape of the tree is given by a relation
$R$ on nodes, an arbitrary proposition, and the call at a child $c$ of $p$
is guarded by $R\,c\,p$. If the guard fails the value is $\emptyset$, and
otherwise it is the successor of the value at the child. For $\omega$ one
call is not enough, and the node also calls children $\lb\,w$ for all
$w\in V_\omega$, a set given by a term (below, it is $D(\emptyset)$); the child $\lb\,m$ for $m\in\omega$ has the
value $m$, and the union of the successors is $\omega$. For an arbitrary
limit $\eta$ no fixed supply of names works, since a cofinal family must
be indexed by a set that grows with $\eta$. The node therefore first calls
the child $\la$ and uses its value $G$, an ordinal below $\eta$ but possibly
large, to compute a supply $D(G)$ of names, and then calls a child $\lb\,w$
for every $w\in D(G)$. It is like a program that first computes a bound
by a recursive call and then allocates an array of that size, filling
each entry by another recursive call. The relation $R$ says which of
these children exist, and so which of the names are used.

A third kind of label, $\lc\,x$ for $x$ in a set $U$ fixed in advance, lets
a node branch over a set that it does not compute. It is used only at
the root, in the proof of Replacement, where $U$ is the domain of the
relation and the root has one child for each of its elements.

A node is named by its path from the root, a list of labels. The
recursion is on accessibility for $R$ on paths, so it may run through a
tree of any height; the targets, the ordinals the nodes stand for, are
not data and play no role in the computation.

\paragraph{The recursion.}
Fix a term $D:\V\to\V$ and a set $U:\V$. In the application $D(G)$
is $V_{\rk G+\omega}$ and $U$ is the domain of a functional relation.

\begin{definition}[\lean{Label}, \lean{Path}, \lean{step}, \lean{F}]\label{def:mat}
A \emph{label} is $\la$, or $\lb\,w$, or $\lc\,x$, for sets $w,x$; two
labels are equivalent if they have the same constructor and
bisimilar arguments. A \emph{path} is a list of labels, deepest first;
$[\,]$ is the root and $l::p$ is the child of $p$ with label $l$. Labels
and paths are types in $\Type_1$. For a relation $R$ on paths, a path $p$
and $\mathrm{rec}:\Pi c.\ R\,c\,p\to\V$ let
\begin{align*}
\mathrm{call}(l)&:=\guard{\mathrm{succ}\,(\mathrm{rec}\,(l::p)\,h)}{h:R\,(l::p)\,p},\\
G&:=\guard{\mathrm{rec}\,(\la::p)\,h}{h:R\,(\la::p)\,p},\\
\mathrm{step}_R\,p\,\mathrm{rec}&:=\mathrm{call}(\la)\ \cup
  \textstyle\bigcup_{i:\overline{D(G)}}\mathrm{call}(\lb\,D(G)_i)\ \cup
  \bigcup_{j:\bar U}\mathrm{call}(\lc\,U_j),
\end{align*}
and let $F_R:\Pi p.\ \Acc_R\,p\to\V$ be defined from $\mathrm{step}_R$ by the
eliminator of $\Acc$, so that
\[F_R\,p\,a=\mathrm{step}_R\,p\,(\lambda c\,h.\ F_R\,c\,a_{c,h}),\]
where $a_{c,h}:\Acc_R\,c$ is obtained from $a$ and $h:R\,c\,p$.
\end{definition}

The step calls the recursion at the child $\la$; then at the children
$\lb\,w$ for the elements $w$ of $D(G)$, where $G$ is the value of the
first call; and at the children $\lc\,x$ for $x\in U$. Every call is
guarded by the proposition that the child is below $p$, which the step
cannot prove and does not need to decide. The value is the union of the
successors of the values of the calls whose guard holds.

\paragraph{Specifications.}
The recursion computes something for every $R$, and to say what, we
describe the intended value of each node by a proposition. A
\emph{target assignment} says which set each path is meant to compute. It
plays the role of the specification of a recursive program, and
coherence is the verification condition saying that one step meets the
specification if the recursive calls do. A path with no target is a
node that should not exist, and the shape of the tree is read off from
the assignment: a child exists iff it has a target. Targets are given
by a relation, not a function, because they are specified by
propositions.

\begin{definition}[\lean{Coherent}]\label{def:coherent}
A \emph{target assignment} is a relation $\tau$ between paths and sets.
We say that $p$ \emph{has the target} $t$ if $\tau\,p\,t$. Its relation is
$R_\tau\,c\,p$: $c=l::p$ for some $l$, and $c$ has a target. Say that $G$
\emph{is the first value at $p$} if the elements of $G$ are the elements
of the targets of $\la::p$ (so $G$ is empty if $\la::p$ has no target),
and that a label $l$ is \emph{available for $G$} if $l=\la$, or $l$ is
equivalent to $\lb\,w$ with $w\in D(G)$, or to $\lc\,x$ with $x\in U$.
The assignment is \emph{coherent} if
\begin{enumerate}
\item the targets of a path are closed under $\approx$, and any two are
  bisimilar;
\item equivalent labels give children with the same targets;
\item if $l::p$ has the target $t$ and $p$ has the target $t'$ then
  $t\in t'$; and
\item if $p$ has the target $t$ and $G$ is the first value at $p$, then
  $x\in t$ iff $x\in t'\cup\{t'\}$ for some target $t'$ of a child $l::p$
  with $l$ available for $G$.
\end{enumerate}
\end{definition}

In words: (1) the target of a path is determined up to $\approx$; (2)
labels that are equal as sets name the same node; (3) targets decrease
along the tree, which, like a decreasing measure for a recursive
program, will give accessibility by $\in$-induction; and (4) if every
child returns its target, the step returns the target of the parent. In
(4), $G$ is what the first call returns and the available labels are
those at which the step calls. A coherent assignment is a proposition
about a relation. No part of it is data, and it may be defined using any
quantifiers. Condition (3) does not require the parent of a path with a
target to have one.

\begin{lemma}[materialization; \lean{materialize}]\label{lem:mat}
If $\tau$ is coherent and $p$ has the target $t$, then $p$ is
$R_\tau$-accessible, and $F_{R_\tau}\,p\,a\approx t$ for every $a:\Acc_{R_\tau}\,p$.
\end{lemma}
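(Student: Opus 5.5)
We prove both claims together by $\in$-induction on the target $t$. Since $\in$-induction holds for every predicate on $\V$, we take the statement
\[Q(t):\quad\forall p.\ \tau\,p\,t\ \to\ \bigl(\Acc_{R_\tau}\,p\ \wedge\ \forall a{:}\Acc_{R_\tau}\,p.\ F_{R_\tau}\,p\,a\approx t\bigr)\]
and assume $Q(t')$ for every $t'\in t$. In fact it is cleaner to use induction along the tree structure, i.e.\ assume $Q(t_i)$ for all $i:\bar t$. Condition (1) then transfers this to any $t'\in t$, because $t'\approx t_i$ for some $i$ and targets are closed under $\approx$.

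Fix $p$ with target $t$. For accessibility, let $c$ satisfy $R_\tau\,c\,p$. Then $c=l::p$ has some target $t'$, and condition (3) gives $t'\in t$. The induction hypothesis makes $c$ accessible, so the constructor of $\Acc$ yields $\Acc_{R_\tau}\,p$.

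For the value, fix $a$ and unfold $F$ once, so that $F\,p\,a=\mathrm{step}\,p\,\mathrm{rec}$ with $\mathrm{rec}\,c\,h=F\,c\,a_{c,h}$. For every child $c=l::p$ and every $h:R_\tau\,c\,p$, the induction hypothesis gives $\mathrm{rec}\,c\,h\approx t_c$ for every target $t_c$ of $c$. This holds for the given accessibility proof, so no proof irrelevance is needed.

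We next show that the computed $G=\guard{\mathrm{rec}\,(\la::p)\,h}{h}$ is the first value at $p$. Its elements are the elements of $\mathrm{rec}\,(\la::p)\,h$ for proofs $h$ that $\la::p$ has a target, and these are exactly the elements of that target. Since the first value is determined up to extension, condition (4) applies to the computed $G$. Here we also need that $D$ respects $\approx$, so that membership in $D(G)$ is well defined. I expect to assume this, or to check it for the intended $D$. The same holds for availability of labels, which depends only on the elements of $D(G)$.

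Now compute the elements of $\mathrm{step}$. We have $x\in\mathrm{call}(l)$ iff there is $h:R_\tau\,(l::p)\,p$ with $x\in\mathrm{succ}(\mathrm{rec}\,(l::p)\,h)$, i.e.\ iff $x\in t'\cup\{t'\}$ for a target $t'$ of $l::p$. This uses the step above together with the fact that $\mathrm{succ}$ respects $\approx$. The three unions range precisely over the labels $\la$, $\lb\,D(G)_i$ and $\lc\,U_j$. Up to equivalence of labels these are exactly the labels available for $G$, and condition (2) lets us replace an equivalent label by the one actually called. So $x\in\mathrm{step}\,p\,\mathrm{rec}$ iff $x\in t'\cup\{t'\}$ for some target $t'$ of an available child. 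By condition (4) this holds iff $x\in t$, and extensionality gives $F\,p\,a\approx t$.

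The main obstacle is the adaptive call. We must identify the computed $G$ with the specified first value only up to $\approx$, and then make sure that the labels $\lb\,w$ for $w\in D(G)$ match those for the specified $G$. Doing so requires congruence of $D$ and condition (2), and it must be handled carefully because labels are compared as data, not up to bisimulation.
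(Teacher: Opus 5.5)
Your proposal is correct and follows the paper's proof: $\in$-induction on the target for all paths at once, accessibility by the constructor via condition (3), and the value by unfolding $F$ once, recognizing the computed $G$ as a first value at $p$, and matching the called labels with the available ones by condition (2) before applying (4). The congruence of $D$ that you flag is not needed. Condition (4) is quantified over every $G$ that is a first value at $p$, so you instantiate it at the computed $G$ itself. The available $\lb$-labels are then, up to equivalence, exactly the $\lb\,D(G)_i$ the step ranges over, and no second $G$ ever has to be compared with it.
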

\begin{proof}
By $\in$-induction on $t$, for all $p$. If $R_\tau\,c\,p$ then $c=l::p$ has a
target $t_c$, and $t_c\in t$ by (3); so the induction hypothesis applies
to every such $c$: it is accessible, and the recursion returns $t_c$ at
it. Hence $p$ is accessible, by the constructor. For the value, unfold
$F\,p\,a$ once. A guard at a child holds iff the child has a target, and
then the recursive call returns that target, up to $\approx$. So the set
$G$ of the step is the first value at $p$; the labels at which the step
makes calls are, up to equivalence, the labels available for $G$, and by
(2) equivalence does not matter; and the value of the step has as
elements the $x\in t'\cup\{t'\}$ for the targets $t'$ of the children with
available labels. By (4) this is $t$.
\end{proof}

The proof uses no excluded middle. The height of the recursion is not
bounded by anything computed from $p$: if the child $\la$ has value $G$,
the children $\lb\,w$ range over $D(G)$, which may be much larger than
anything seen so far, and so on below them.

\section{Replacement from uniform assignments}\label{sec:repl}

Let $\varphi$ be functional on a set $s$, and suppose that for each $x\in s$ the
rank $\eta_x$ of the value at $x$ has a coherent assignment specifying it. We
want a set containing all the $\eta_x$. Build one tree: the root has a
child $\lc\,x$ for each $x\in s$, and below that child is the tree
specifying $\eta_x$. The root itself gets no target, because its target
would be the supremum of the $\eta_x$, whose existence is what we are
proving. It does not need one. Accessibility of a node only asks for
accessibility of its children, so the root is accessible, the recursion
returns a term there, and unfolding one step shows that the term is the
supremum. The specification is needed below the root but not at it.

One point needs care. The combined tree must be described by a single
proposition, which says of a path through $\lc\,x$ that $x\in s$, that $\eta$ is
the rank of the value at $x$, and that the rest of the path has a given
target under the assignment for $\eta$. So the assignments must be given
for all $\eta$ at once, as one relation $T_\eta$ with $\eta$ as a parameter: we
cannot pick an assignment for each $x$, since that would be a choice
over $s$. This is the role of the family $T$ in the theorem.

\begin{theorem}[\lean{replacement}]\label{thm:repl}
Let $C$ be a class of sets and $T_\eta$ a target assignment for each set
$\eta$, such that $T_\eta$ only depends on $\eta$ up to $\approx$ and, for $\eta$ in $C$,
$T_\eta$ is coherent for the parameter $U=s$ and gives the root the
target $\eta$. Let $\varphi$ be a relation that respects $\approx$ and is functional
on $s$, with all values in $C$. Then there is a set whose elements are
the $y$ with $\varphi(x,y)$ for some $x\in s$.
\end{theorem}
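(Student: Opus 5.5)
The plan is to assemble the assignments $T_\eta$ into a single target assignment $\tau$ on paths. The root gets no target, and below each root child $\lc\,x$ we graft the tree of $T_y$ for the value $y$ at $x$. We then run the recursion $F_{R_\tau}$ with $U=s$, check that the root is accessible, read off its value as a set containing every $y_x$, and cut the image out of it by Separation.

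\emph{Step 1: the combined assignment.} Paths are lists deepest first, so a path through the root child $\lc\,x$ has the form $q\mathbin{+\!\!+}[\lc\,x]$, where $q$ is a path of the subtree. Define
\[\tau\,(q\mathbin{+\!\!+}[\lc\,x])\,t :\iff x\in s\ \wedge\ \exists y.\ \varphi(x,y)\wedge T_y\,q\,t,\]
and let no other path (the root, or a path whose last label is not a $\lc$) have a target. This is a single proposition with no choice over $s$, because $T$ is given uniformly in its parameter.

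\emph{Step 2: coherence of $\tau$.} This is the main obstacle: Lemma~\ref{lem:mat} needs $\tau$ coherent everywhere, and each condition has to be transported through the suffix $[\lc\,x]$. At the root and at paths without targets, (3) and (4) are vacuous. For the remaining conditions:
\begin{itemize}
\item \emph{(1).} Use functionality of $\varphi$ on $s$: any two witnesses $y,y'$ are bisimilar, and $T_y$ depends on $y$ only up to $\approx$. Closure under $\approx$ and uniqueness of targets then come from (1) for the coherent $T_y$, which is coherent because $y\in C$.
\item \emph{(2).} Equivalent labels inside $q$ are handled by (2) for $T_y$. For equivalent root labels $\lc\,x$ and $\lc\,x'$ with $x\approx x'$, use that $\varphi$ respects $\approx$ and that $x\in s$ is $\approx$-invariant.
\item \emph{(3) and (4).} The children of $q\mathbin{+\!\!+}[\lc\,x]$ are the paths $l::q\mathbin{+\!\!+}[\lc\,x]$. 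Their targets under $\tau$ are exactly the targets of $l::q$ under $T_y$, by the functionality argument above. Hence the first value at $q\mathbin{+\!\!+}[\lc\,x]$ equals the first value at $q$ for $T_y$, availability of labels is the same notion (with the same $D$ and $U=s$), and (3) and (4) transfer verbatim from $T_y$.
\end{itemize}
The delicate part is stating this correspondence as a lemma, ``targets of $q\mathbin{+\!\!+}[\lc\,x]$ under $\tau$ equal targets of $q$ under $T_{y}$ whenever $\varphi(x,y)$, $x\in s$'', and deriving all four conditions from it.

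\emph{Step 3: the root.} The children of the root with an $R_\tau$-guard are only the $\lc\,x$ with $x\in s$, and $\lc\,x$ has the target $y_x$, since $T_{y_x}$ gives $[\,]$ the target $y_x$. By Lemma~\ref{lem:mat}, each such child is accessible and $F$ returns a set bisimilar to $y_x$ there. The constructor of $\Acc$ therefore gives $a:\Acc_{R_\tau}[\,]$, even though the root has no target.

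Unfolding $F_{R_\tau}\,[\,]\,a$ once, the guard at $\la$ fails, so $G=\emptyset$. The guards at every $\lb\,w$ also fail. Each $\mathrm{call}(\lc\,s_j)$ contributes $\mathrm{succ}(y)$ when $\varphi(s_j,y)$. So the value $S$ has as elements exactly the $z\in y\cup\{y\}$ with $\varphi(x,y)$ for some $x\in s$.

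Finally, the required set is $\{z\in S\mid \exists x\in s.\ \varphi(x,z)\}$: every such $z$ lies in $S$ as $z\in z\cup\{z\}$. This step uses no excluded middle.
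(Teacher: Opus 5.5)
Your proposal is correct and follows the paper's proof essentially step for step. It uses the same glued assignment on paths $q\mathbin{+\!\!+}[\lc\,x]$, with an existential over the value of $\varphi$. Coherence is transported to $T_y$ using functionality and the $\approx$-invariance of $T$, and the target-less root is shown accessible by the constructor. The rest is one unfolding of $F$ at the root and Separation from the resulting set. Your explicit correspondence lemma between targets below $[\lc\,x]$ under $\tau$ and targets under $T_y$ is exactly the detail the paper compresses into ``each condition reducing to the same condition for some $T_\eta$''.
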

\begin{proof}
Let $\tau$ be the assignment that gives the path $r\mathbin{+\!\!+}[\lc\,x]$ the
target $t$ if $x\in s$, $\varphi(x,\eta)$, and $T_\eta\,r\,t$. It is \emph{one}
proposition, defined from $s$, $\varphi$ and $T$; no assignment is chosen for
each $x$. It is coherent, each condition reducing to the same condition
for some $T_\eta$, since the paths below $[\lc\,x]$ are in bijection with all
paths. The root has no target under $\tau$. Its target would be the union
of the $\eta\cup\{\eta\}$, and the existence of that set is the point at
issue. But every child of the root that has a target is accessible by
Lemma~\ref{lem:mat}, so the root is accessible by the constructor, and
$\theta:=F_{R_\tau}\,[\,]\,a$ is a term of type $\V$, because $s$ is a
variable of type $\V$. Unfolding the recursion once, the first value at
the root is empty, the children with a target are the $[\lc\,x]$ with
$x\in s$, and by the lemma the calls there return the values of $\varphi$. So
$y\in\theta$ iff $y\in\eta\cup\{\eta\}$ for some value $\eta$ of $\varphi$ on $s$, and the image is
$\{y\in\theta\mid\exists x\in s.\ \varphi(x,y)\}$.
\end{proof}

The assignments will come from a rule that acts at one node at a time.
A rule says, given the target $\eta$ of a node, which labels have children
and what their targets are; unfolding it from the root gives an
assignment, uniformly in $\eta$. Think of it as a strategy for descending
from an ordinal: at each node, name some smaller ordinals that are
cofinal below it, using names the step can produce from the first
value.

\begin{lemma}[\lean{Rule.coherent}]\label{lem:rule}
Let $r(\eta,l,\xi)$ be a relation, the \emph{rule}, which determines $\xi$ up
to $\approx$ from $\eta$ and $l$ and respects $\approx$ and equivalence of labels. Let
$C$ be a class closed under $\approx$ such that for $\eta$ in $C$: if $r(\eta,l,\xi)$
then $\xi\in\eta$ and $\xi$ is in $C$; and, for $G$ with elements the
elements of the $\zeta$ with $r(\eta,\la,\zeta)$, we have $x\in\eta$ iff
$x\in\xi\cup\{\xi\}$ for some $l$ available for $G$ and $\xi$ with $r(\eta,l,\xi)$.
Define $T_\eta$ by recursion on the path: the root has the targets
bisimilar to $\eta$, and $l::p$ has the target $\xi$ if $p$ has a target $t$
with $r(t,l,\xi)$. Then $T_\eta$ is coherent for every $\eta$ in $C$.
\end{lemma}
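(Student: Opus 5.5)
Fix $\eta$ in $C$. The plan is to prove first an invariant of $T_\eta$: \emph{every target of every path is in $C$, and every path $p$ with a target satisfies $t\approx t'$ for any two of its targets.} Both parts go by induction on the path, that is, on the length of the list. At the root the targets are the sets bisimilar to $\eta$. These are in $C$ because $C$ is closed under $\approx$, and any two of them are bisimilar by transitivity.

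For the step, let $l::p$ have targets $\xi$ and $\xi'$. Then $r(t,l,\xi)$ and $r(t',l,\xi')$ for targets $t,t'$ of $p$. By induction $t\approx t'$ and $t\in C$. Since $r$ respects $\approx$, we get $r(t,l,\xi')$, and since $r$ determines $\xi$ up to $\approx$ from $\eta$ and $l$, we get $\xi\approx\xi'$. Finally $\xi\in C$ by the hypothesis on $r$ for $t$ in $C$.

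With the invariant in hand, the four conditions of Definition~\ref{def:coherent} follow in turn.

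\emph{Condition (1).} Closure of the targets under $\approx$ holds at the root by definition. At $l::p$ it holds because $r$ respects $\approx$ in its third argument. Uniqueness up to $\approx$ is the invariant.

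\emph{Condition (2).} This is again by induction on the path, using that $r$ respects equivalence of labels. One point needs care here: the recursion on the path only ever inspects the head label, so equivalent labels give the same targets.

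\emph{Condition (3).} If $l::p$ has target $\xi$ and $p$ has target $t'$, then by definition $r(t,l,\xi)$ for some target $t$ of $p$. The hypothesis on $r$ for $t\in C$ gives $\xi\in t$, and $t\approx t'$ gives $\xi\in t'$.

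\emph{Condition (4).} This is the main point. Let $p$ have target $t$, and let $G$ be the first value at $p$, i.e.\ the elements of $G$ are the elements of the targets of $\la::p$. By the definition of $T_\eta$ these targets are exactly the $\zeta$ with $r(t',\la,\zeta)$ for targets $t'$ of $p$. Since all targets of $p$ are bisimilar to $t$ and $r$ respects $\approx$, these are the $\zeta$ with $r(t,\la,\zeta)$. So $G$ has exactly the elements required in the hypothesis of the lemma, applied to $t\in C$. That hypothesis yields: $x\in t$ iff $x\in\xi\cup\{\xi\}$ for some $l$ available for $G$ and some $\xi$ with $r(t,l,\xi)$. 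To conclude, note that such $\xi$ are exactly the targets of $l::p$, by the same transport along $t\approx t'$.

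The main obstacle I expect is bookkeeping rather than ideas. Every step must transport along $\approx$, both in the target and in the rule's first argument, and "the elements of $G$" is a property rather than a constructed set. This is why I would establish the invariant (uniqueness up to $\approx$ and membership in $C$) first and use it uniformly. No excluded middle is needed: every step is a rewriting of membership conditions.
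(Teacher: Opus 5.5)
Your proof is correct and is the paper's argument written out in full: the paper also shows by induction on the path that every target lies in $C$, and then reads each coherence condition at $p$ off the hypotheses on the rule at a target of $p$. Your extra invariant, that any two targets of a path are bisimilar, is the part of condition (1) the paper leaves implicit; the induction you invoke for condition (2) is unnecessary, since that condition compares $l::p$ and $l'::p$ for the same $p$ and follows in one step from $r$ respecting equivalence of labels.
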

\begin{proof}
Every target is in $C$, by induction on the path, and the conditions of
Definition~\ref{def:coherent} at $p$ are the hypotheses on the rule at the
target of $p$.
\end{proof}

\section{The definability rule}\label{sec:rule}

We need a rule whose class $C$ contains every ordinal. At a successor
$\zeta+1$ one child with target $\zeta$ suffices, and at $\omega$ the children can
be indexed by the natural numbers. At any other limit $\eta$ the children
must have targets cofinal in $\eta$, their labels must come from $D(G)$,
and $G$, being the target of a child, is below $\eta$. So the rule must
send sets of rank below $\eta$ cofinally into $\eta$, definably in $\eta$. We
know of no definable way to select one cofinal map. The evident way uses
a definable well-order of the universe, and $\ZF$ does not provide one:
it is consistent, even with the axiom of choice, that there is no
definable well-order of the reals \cite{feferman65}. (A first version of
the construction worked inside G\"odel's $L$, where there is such a
well-order, selected the least cofinal map, and obtained a model of
$\ZFC$ inside $L$.) The solution is not to select, and the rule below
needs neither $L$ nor choice.

Instead the node at $\eta$ takes \emph{every} cofinal map of a certain
kind, one that can be named by data of rank below $\eta$; whichever map one
would have liked to choose, it is among them. A map is named by a set
$q$, a code, together with its domain $s$. The codes are interpreted by a
single relation $I$ fixed in advance: $I(\eta,q,x,y)$ says that the
function named by $q$, evaluated over $V_\eta$, sends $x$ to $y$. It is an
evaluator whose programs are sets. The rule works for any $I$, and in
Section~\ref{sec:zf} it is first-order satisfaction in $V_\eta$, with $q$
coding a formula and its parameters. That choice makes the failure of
the rule useful: if no function named by a code of rank below $\eta$ is
unbounded in $\eta$, then $V_\eta$ satisfies Replacement for every formula.

The first call supplies the names. The label $\lb\,w$ at $\eta$ is a triple
$\langle q,s,x\rangle$: a code, a domain, and a point of the domain, with the
rank of the value at $x$ as its target. The first value $G$ must be large
enough that $D(G)=V_{\rk G+\omega}$ contains these triples, and, being a
target, it must be determined by $\eta$. The least ordinal $\nu$ such that
some code and domain of rank below $\nu$ give an unbounded map has both
properties.

\begin{definition}[\lean{Unb}, \lean{Reach}, \lean{G$\nu$}]
Fix a relation $I(\eta,q,x,y)$ respecting $\approx$, read as ``$y$ is the value
at $x$ of the function defined over $V_\eta$ by the code $q$''. A pair
$(q,s)$ is \emph{unbounded at $\eta$} if $I(\eta,q,\cdot,\cdot)$ is functional on
$s$, every value at an $x\in s$ has rank in $\eta$, and for every $\zeta\in\eta$
some value at an $x\in s$ has rank at least $\zeta$. The ordinal $\eta$ is
\emph{reached from $\nu$} if some pair $(q,s)$ with $\rk q,\rk s\in\nu$ is
unbounded at $\eta$, and \emph{reachable} if it is reached from some
$\nu\in\eta$. For reachable $\eta$ let
$\nu(\eta):=\{\nu\in\eta\mid\eta\text{ is not reached from }\nu\}$, which is the
least ordinal from which $\eta$ is reached.
\end{definition}

The least $\nu$ is obtained by Separation and not by choosing: being
reached from $\nu$ is upwards closed in $\nu$, so the set of the $\nu$ that
fail is an ordinal, it is an element of $\eta$, and with excluded middle
$\eta$ is reached from it.

\begin{definition}[\lean{rule}]\label{def:rule}
With $D(G):=V_{\rk G+\omega}$, the rule $r(\eta,l,\xi)$ holds in the following
cases and no others.
\begin{enumerate}
\item $\eta\approx\zeta\cup\{\zeta\}$, $l=\la$, and $\xi\approx\zeta$.
\item $\eta\approx\omega$, and either $l=\la$ and $\xi\approx\emptyset$, or $l=\lb\,w$ with
  $\rk w\in\omega$ and $\xi\approx\rk w$.
\item $\eta$ is not a successor, not $\omega$, and reachable, and either
  $l=\la$ and $\xi\approx\nu(\eta)$, or $l=\lb\,w$ with $w\approx\langle q,s,x\rangle$, where
  $(q,s)$ is unbounded at $\eta$, $x\in s$, and $\xi$ is the rank of the value
  at $x$.
\end{enumerate}
\end{definition}

\paragraph{Example.}
Take $\eta=\omega+\omega$. It is not a successor and not $\omega$. The formula
$\psi(x,y)$ saying that $x\in\omega$ and $y=\omega+x$ defines over $V_\eta$ a function
on $\omega$ with values cofinal in $\eta$. Its code $q$ has finite rank and
$\omega$ has rank $\omega$, so $\eta$ is reached from $\omega+1$; it is reached from no
smaller ordinal, since a function on a set of finite rank has finitely
many values. So the root has the child $\la$ with target $\omega+1$, and a
child $\lb\,\langle q',s',x\rangle$ for every unbounded pair $(q',s')$ and $x\in s'$. The
step calls those named in $D(\omega+1)$, among them $\lb\,\langle q,\omega,n\rangle$ with
target $\omega+n$. Below a node with target $\omega+n$ a chain of children $\la$
descends to $\omega$, whose children $\lb\,w$, for $w$ of finite rank, have the
finite targets $\rk w$. The
recursion never sees $\eta$. It computes $\omega+1$ at the child $\la$ of the
root, then $V_{\omega+1+\omega}$, then calls every child named by an element of
it, and the tree tells it which of those calls succeed.

\begin{lemma}[\lean{worldly\_rule}]\label{lem:worldlyrule}
Assume excluded middle. Let $C$ be the class of ordinals $\eta$ such that
every ordinal $\mu\le\eta$ is $0$, a successor, $\omega$, or reachable. The rule
satisfies the hypotheses of Lemma~\ref{lem:rule} for $C$.
\end{lemma}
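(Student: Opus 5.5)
We verify the hypotheses of Lemma~\ref{lem:rule} one by one, splitting on the three clauses of Definition~\ref{def:rule}. We first note that the clauses are mutually exclusive: a successor is not $\omega$, and clause~3 excludes both. This gives functionality, since in each clause $\xi$ is determined up to $\approx$ by $\eta$ and $l$. In clause~3 we use that $\nu(\eta)$ is defined by Separation from $\eta$, and that $I(\eta,q,\cdot,\cdot)$ is functional on $s$ by unboundedness, so the value at $x$, and hence its rank, is unique up to $\approx$. Respect for $\approx$ and for equivalence of labels holds because every ingredient ($\rk$, Kuratowski triples, $I$, reachability) respects $\approx$. The class $C$ is closed under $\approx$ because ordinals and the conditions on $\mu\le\eta$ are invariant.

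Next we show that $r(\eta,l,\xi)$ implies $\xi\in\eta$ and $\xi\in C$. In clause~1, $\zeta\in\zeta\cup\{\zeta\}$. In clause~2, $\emptyset\in\omega$ and $\rk w\in\omega$. In clause~3, $\nu(\eta)\in\eta$ by the remark after the definition, which uses excluded middle, and the rank of a value lies in $\eta$ by unboundedness. In every case $\xi$ is an ordinal and $\xi\in\eta$, so every $\mu\le\xi$ satisfies $\mu\le\eta$ and inherits the dichotomy from $\eta\in C$. Hence $\xi\in C$.

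The main point is the covering condition (4). Let $G$ consist of the elements of the $\zeta$ with $r(\eta,\la,\zeta)$, so that $G\approx\zeta$, $\emptyset$, or $\nu(\eta)$ according to the case. Since $\eta\in C$, excluded middle and linearity of ordinals give that $\eta$ is $0$, a successor, $\omega$, or a reachable non-successor other than $\omega$. For $\eta=0$ there are no children and $\eta$ is empty, so (4) holds. For a successor, the only available child with a target is $\la$ with target $\zeta$, and $\zeta\cup\{\zeta\}\approx\eta$. For $\omega$, labels $\lb\,w$ with $w\in D(\emptyset)=V_\omega$ are available, and for each $n\in\omega$ the label $\lb\,n$ gives target $n$, whose successor contains $n$. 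Conversely every target is finite, so every successor lies in $\omega$.

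For clause~3, the direction ``$x\in\xi\cup\{\xi\}$ implies $x\in\eta$'' follows from $\xi\in\eta$ and the fact that $\eta$ is a limit, hence transitive and closed under successor. The converse is where we expect the real work. Given $x\in\eta$, pick an unbounded pair $(q,s)$ with $\rk q,\rk s\in\nu(\eta)$, which exists by excluded middle. Unboundedness at $\zeta:=\rk x+1\in\eta$ (using that $\eta$ is a limit) yields $x'\in s$ whose value has rank $\xi\ge\zeta$, so $x\in\xi$. We must then check that the label $\lb\,\langle q,s,x'\rangle$ is available, that is, that $\langle q,s,x'\rangle\in V_{\rk\nu(\eta)+\omega}$. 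This holds because $\rk q,\rk s<\nu(\eta)$, so $\rk x'<\nu(\eta)$ and the Kuratowski triple adds only finitely many ranks. The rank bookkeeping for triples, together with the fact that $\rk\nu\approx\nu$, is the step we would formalize most carefully.
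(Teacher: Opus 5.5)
Your proof is correct and follows the paper's argument essentially step for step: you check exclusivity and functionality of the clauses, then downward closure of $C$, then the covering condition case by case, and you show $\lb\,\langle q,s,x'\rangle$ is available because the triple has rank below $\nu(\eta)+4$ and so lies in $D(\nu(\eta))$. Two small points: functionality in clause~3 also needs that a Kuratowski triple determines its components up to $\approx$, not only that triples respect $\approx$; and applying unboundedness at $\zeta:=x$ itself already gives $x\in\rk y\cup\{\rk y\}$, so the detour through $\rk x+1$ and the limit property is unnecessary.
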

\begin{proof}
The cases are exclusive, and in each the target is determined by $\eta$
and the label: in (3) because a triple determines its components and
$(q,s)$ is functional on $s$. Every target is an element of $\eta$, and
$C$ is closed downwards. For the covering condition let $x\in\eta$. In (1),
$x\in\zeta\cup\{\zeta\}$ and $\la$ is available. In (2), $x$ is a natural number
$n$, the first value is $\emptyset$, and $n\in V_\omega=D(\emptyset)$. In (3),
$\eta$ is reached from $\nu(\eta)$, by a pair $(q,s)$; there is $x'\in s$ with a
value $y$ such that $x\in\rk y\cup\{\rk y\}$; the first value is $\nu(\eta)$; and
$\langle q,s,x'\rangle$ has rank below $\nu(\eta)+4$, since $q$, $s$ and $x'\in s$ have
rank below $\nu(\eta)$, so it is in $D(\nu(\eta))$. If $\eta$ is $0$ it has no
elements, and no child has a target.
\end{proof}

\begin{theorem}[\lean{dichotomy}]\label{thm:dich}
Assume excluded middle. Either every relation $\varphi$ that respects $\approx$ and
is functional on a set $s$ has an image in $\V$; or there is an ordinal
that is not $0$, not a successor, not $\omega$, and not reachable.
\end{theorem}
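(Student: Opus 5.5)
We argue by excluded middle on the second alternative. If some ordinal is not $0$, not a successor, not $\omega$ and not reachable, we are done. Otherwise every ordinal is $0$, a successor, $\omega$ or reachable. Then the class $C$ of Lemma~\ref{lem:worldlyrule}, the ordinals $\eta$ all of whose $\mu\le\eta$ are of one of these kinds, contains every ordinal. So it suffices to derive the image property from the assumption that $C$ is the class of all ordinals.

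Fix $\varphi$ respecting $\approx$ and functional on $s$. We do not apply Theorem~\ref{thm:repl} to $\varphi$ directly, since its values need not be ordinals. Instead we pass to ranks: let $\psi(x,\eta)$ hold iff $\exists y.\ \varphi(x,y)\wedge\eta\approx\rk y$. This $\psi$ respects $\approx$, because $\rk$ respects bisimulation (an easy $\in$-induction, since $\rk x$ is determined by the ranks of the elements of $x$). It is functional on $s$ because $\varphi$ is and $\rk$ respects $\approx$. Its values are ordinals, hence in $C$.

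For the family of assignments we take $T_\eta$ as in Lemma~\ref{lem:rule}, built from the rule of Definition~\ref{def:rule}, with $D(G)=V_{\rk G+\omega}$ and the parameter $U=s$. The definition of $T_\eta$ depends on $\eta$ only through the root targets, the sets bisimilar to $\eta$, so it depends on $\eta$ only up to $\approx$. By Lemma~\ref{lem:worldlyrule} and Lemma~\ref{lem:rule}, $T_\eta$ is coherent for $\eta\in C$, and by construction it gives the root the target $\eta$.

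The parameter $U$ needs one check, which I expect to be the only real obstacle. The rule never uses $\lc$-labels, so the covering condition~(4) of Definition~\ref{def:coherent} does not depend on $U$. Availability of $\lc\,x$ only enlarges the set of labels that are called, and no $\lc$-child has a target. Hence coherence for an arbitrary $U$, in particular $U=s$, follows from the coherence established in Lemma~\ref{lem:worldlyrule}. If the formal statement of Lemma~\ref{lem:rule} fixes $U$ in advance, we instantiate it with $U=s$ directly.

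Theorem~\ref{thm:repl} applied to $\psi$ now yields a set $\Theta$ whose elements are the ranks $\rk y$ of the values $y$ of $\varphi$ on $s$. Let $\beta:=\bigcup_{\eta\in\Theta}(\eta\cup\{\eta\})$, a term by Union and Pairing. Every value $y$ has $\rk y\in\beta$, hence $y\in V_\beta$. The image of $\varphi$ on $s$ is therefore
\[\{y\in V_\beta\mid \exists x\in s.\ \varphi(x,y)\},\]
which exists by Separation for propositional predicates. This gives the first alternative of the dichotomy.
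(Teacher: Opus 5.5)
Your proposal is correct and follows the paper's proof. You case on whether every ordinal is $0$, a successor, $\omega$ or reachable, so that $C$ is the class of all ordinals; you apply Theorem~\ref{thm:repl} to the rank relation with the assignments of Lemmas~\ref{lem:rule} and~\ref{lem:worldlyrule}; then you bound the ranks by an ordinal and separate the image from the corresponding $V$-level. Your extra checks (that $\rk$ respects $\approx$, that coherence does not depend on $U$ because the rule gives no $\lc$-child a target, and forming $\beta$ from the statement of Theorem~\ref{thm:repl} rather than citing $\theta$ from its proof) are sound elaborations of details the paper leaves implicit.
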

\begin{proof}
If every ordinal is $0$, a successor, $\omega$, or reachable, then $C$ is
the class of all ordinals. Given $s$ and $\varphi$, apply
Theorem~\ref{thm:repl} to the relation ``$\eta$ is the rank of the value of
$\varphi$ at $x$'', whose values are ordinals, with the assignments of
Lemmas~\ref{lem:rule} and~\ref{lem:worldlyrule}. Its image is a set of
ordinals, which is included in an ordinal $\theta$ by the proof of the
theorem, and the image of $\varphi$ is separated from $V_\theta$.
\end{proof}

The theorem holds for any $I$. The first alternative says more the
fewer ordinals are reachable, the second says more the more are; we now
choose $I$ so that the second gives a model of $\ZF$.

\section{\texorpdfstring{$\ZF$}{ZF} and its two models}\label{sec:zf}

\paragraph{Syntax and proofs (\lean{Fml}, \lean{Prf}, \lean{Con}).}
Formulas are built from $v_i\in v_j$, $v_i=v_j$, $\bot$, $\to$ and $\forall$,
with de Bruijn variables; $\neg$, $\wedge$, $\vee$, $\leftrightarrow$ and $\exists$ are
defined. The proof system is a Hilbert system: the propositional axioms
$K$ and $S$ and double negation elimination, modus ponens,
generalization, instantiation of $\forall$ by a variable,
$\forall(\varphi{\uparrow}\to\psi)\to\varphi\to\forall\psi$ where $\varphi{\uparrow}$ is $\varphi$ with its
variables shifted, reflexivity of equality, and substitutivity of
equality in atomic formulas. Nonlogical axioms are open formulas, their
free variables being parameters. $\Con(T)$ says that $T$ does not prove
$\bot$. Satisfaction $M\models\varphi[e]$ is defined for a class $M:\V\to\Prop$
and an environment $e:\N\to\V$, by recursion on $\varphi$ into $\Prop$, with
$\in$ and $\approx$ for the atoms and quantifiers ranging over $M$. Soundness
(\lean{soundness}): if every axiom of $T$ holds in $M$ under every
environment in $M$, so does every theorem; so a theory with a nonempty
class model is consistent. Double negation elimination is where
soundness uses excluded middle.

\paragraph{The axioms (\lean{ZF}).}
Extensionality; Foundation, as the existence of an $\in$-minimal element
of a nonempty set; Pairing, Union and Power set, in the form that a
set including the required one exists; Infinity, as a set containing an
empty set and closed under $y\mapsto y\cup\{y\}$; Separation for every formula
$\psi$, with arbitrary parameters,
$\exists y\,\forall z\,(z\in y\leftrightarrow z\in x\wedge\psi)$; and Replacement for every
formula $\psi$, with arbitrary parameters,
\[\forall x{\in}a\ \forall y\,y'\,(\psi(x,y)\to\psi(x,y')\to y=y')\ \to\ \exists b\ \forall y\,
  (\exists x{\in}a\ \psi(x,y)\to y\in b).\]
For each axiom a lemma states what its satisfaction in a class model
means in ordinary notation (\lean{sat\_ext}, \dots, \lean{sat\_repl}),
as a check that the de Bruijn indices say what is intended.

\paragraph{Models (\lean{ZFModel}).}
A class $M$ is a model of $\ZF$ if it is transitive, contains $\emptyset$ and
$\omega$, is closed under pairs, unions, power sets and subsets given by
arbitrary predicates, and for every formula $\psi$, parameters in $M$ and
$a$ in $M$ such that $\psi$ is functional on $a$ in the sense of $M$, some
$b$ in $M$ contains the values.

\begin{lemma}[\lean{V\_model}]
If every functional relation on a set has an image, the class of all
sets is a model of $\ZF$.
\end{lemma}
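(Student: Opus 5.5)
We check the clauses in the definition of a model of $\ZF$ for the class $M:=\lambda x.\ \top$ one at a time. Each reduces to an operation of Section~\ref{sec:sets} or to the hypothesis.

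Transitivity is trivial, and $\emptyset$ and $\omega$ are in $M$. Closure under pairs, unions and power sets is given by the corresponding terms, since every set is in $M$. Closure under subsets given by an arbitrary predicate $P:\V\to\Prop$ is Separation $\{z\in x\mid P\,z\}$. If the definition asks for $P$ to respect $\approx$, that requirement is immediate; if it does not, Separation still works, because its index type is the subtype $\{i:\bar x\mid P\,x_i\}$. In that case membership is read through $\approx$, and we use $\{z\in x\mid\exists z'\approx z.\ P\,z'\}$ where needed.

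The only real clause is Replacement. We are given a formula $\psi$, an environment $e$ and a set $a$, with $\psi$ functional on $a$ in the sense of $M$. We define the relation
\[
\varphi(x,y):=M\models\psi[y,x,e],
\]
with the de Bruijn order chosen to match the statement of Replacement.

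\begin{itemize}
\item \emph{Respecting $\approx$.} Satisfaction respects $\approx$ in the environment. This follows by induction on formulas, since the atoms are $\in$ and $\approx$, and both respect bisimulation.
\item \emph{Functionality.} Functionality in the sense of $M$ says that $\varphi(x,y)$ and $\varphi(x,y')$ imply $y\approx y'$ for $x\in a$. This is exactly functionality of $\varphi$ on $a$ up to $\approx$.
\end{itemize}

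The hypothesis of the lemma then gives a set $b$ whose elements are the $y$ with $\varphi(x,y)$ for some $x\in a$. In particular $b$ contains all the values, and $b$ is in $M$.

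The main obstacle is bookkeeping, not mathematics. First, we need the lemma that satisfaction is invariant under replacing environment entries by bisimilar sets, proved by induction on $\psi$ with the quantifier case shifting the environment. Second, we must match the de Bruijn shape of Replacement with $\varphi$; for this we would use the stated lemma \lean{sat\_repl} rather than unfold it by hand. If the hypothesis of the lemma is phrased for relations that need not respect $\approx$, the invariance lemma is unnecessary, and the proof is just the definition of $\varphi$ plus the hypothesis.
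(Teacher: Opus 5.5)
Your proof is correct and follows the route the formalized lemma must take; the paper itself gives no written proof beyond the statement. The closure clauses are immediate for the class of all sets from the operations of Section~\ref{sec:sets}, and Replacement is the hypothesis applied to the satisfaction relation $\varphi(x,y)$, which is functional on $a$ exactly because $\psi$ is functional in the sense of $M$; the one ingredient with content is the lemma you isolate, that satisfaction is invariant under replacing environment entries by bisimilar sets, needed because the image hypothesis, as in Theorem~\ref{thm:dich}, is stated for relations that respect $\approx$.
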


For the other alternative let $I(\eta,q,x,y)$ hold if $q\approx\langle\ulcorner\psi\urcorner,
\langle k,\langle e_0,\dots,e_{k-1}\rangle\rangle\rangle$ for a formula $\psi$ with free variables
below $k+2$ and a code $\ulcorner\psi\urcorner$ of it as a hereditarily finite set,
$x,y\in V_\eta$, and $V_\eta\models\psi[x,y,e_0,\dots,e_{k-1}]$ (\lean{ISat}). Codes and
tuples are injective up to $\approx$, so the code $q$ determines the
relation.

\begin{lemma}[\lean{Vl\_model}]\label{lem:vrho}
Assume excluded middle. If $\rho$ is an ordinal that is not $0$, not a
successor, not $\omega$, and not reachable for this $I$, then $V_\rho$ is a model
of $\ZF$.
\end{lemma}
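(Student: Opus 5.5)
We verify the clauses of the definition of a model for the class $M:=V_\rho$, that is, $z\in M$ iff $\rk z\in\rho$. Transitivity holds because $y\in z$ implies $\rk y\in\rk z$ and ordinals are transitive. The empty set has rank $0\in\rho$. For Pairing, Union, Power set and Separation we use that $\rho$ is a limit: it is not $0$ and not a successor, so $\alpha\in\rho$ gives $\alpha\cup\{\alpha\}\in\rho$. Then $\{x,y\}$ and $\mathcal P(x)$ have rank at most the successor of the ranks involved; $\bigcup x$ and $\{z\in x\mid P\,z\}$ have rank at most $\rk x$. For $\omega$ we note that $\rho$ is a limit ordinal other than $0$ and $\omega$, hence $\omega\in\rho$ (by linearity of ordinals, which needs excluded middle), so $\rk\omega=\omega\in\rho$. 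These are routine rank computations.

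The substance is Replacement. Fix a formula $\psi$ with parameters $e_0,\dots,e_{k-1}\in V_\rho$, and $a\in V_\rho$ with $\psi$ functional on $a$ in the sense of $V_\rho$. We form the code $q:=\langle\ulcorner\psi\urcorner,\langle k,\langle e_0,\dots,e_{k-1}\rangle\rangle\rangle$. Its rank lies in $\rho$, since $\ulcorner\psi\urcorner$ is hereditarily finite and tuples raise rank by finitely many steps at a limit. Set $s:=\{x\in a\mid \exists y\in V_\rho.\ V_\rho\models\psi[x,y,\vec e]\}$, which has rank $\le\rk a\in\rho$. By construction $I(\rho,q,\cdot,\cdot)$ is functional on $s$, and every value lies in $V_\rho$, so its rank is in $\rho$. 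Let $\nu$ be the successor of $\max(\rk q,\rk s)$; then $\nu\in\rho$, and $\rk q,\rk s\in\nu$. Since $\rho$ is not reachable, it is not reached from $\nu$, so $(q,s)$ is not unbounded at $\rho$. Two of the three clauses of unboundedness hold, so excluded middle yields some $\zeta\in\rho$ such that every value of $\psi$ at an $x\in s$ has rank below $\zeta$.

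We take $b:=V_\zeta$. It lies in $V_\rho$ because $\rk V_\zeta\approx\zeta\in\rho$. It contains every $y\in V_\rho$ with $V_\rho\models\psi[x,y,\vec e]$ for some $x\in a$, since such an $x$ lies in $s$. The quantifiers of the Replacement clause of the model definition are relativized to $M$, which matches the sense in which $I$ evaluates over $V_\rho$.

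The main obstacle I expect is bookkeeping rather than ideas. First, one must check that the model notion's ``$\psi$ functional on $a$ in the sense of $M$'' agrees exactly with ``$I(\rho,q,\cdot,\cdot)$ functional on $s$''. This includes the variable convention: the free variables of $\psi$ must lie below $k+2$, with $x,y$ as $v_0,v_1$. Second, the definition of unbounded requires the values at $x\in s$ to exist in the right sense, which is why we restrict $a$ to $s$ rather than use $a$ directly. Third, the rank bounds for $q$ require that codes of formulas and Kuratowski tuples stay below a limit. If the functionality clause instead quantifies over all $x\in s$ without requiring existence, restricting to $s$ is harmless either way.
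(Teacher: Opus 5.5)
Your proof is correct and follows the paper's argument. Trichotomy gives that $\rho$ is a limit with $\omega\in\rho$, which yields the closure clauses. For Replacement, the code $q$ and a $\nu\in\rho$ bounding the ranks, together with non-reachability, show the pair is not unbounded, and excluded middle then gives $\zeta\in\rho$ with $b:=V_\zeta$. The one difference is your restriction of $a$ to $s$, which is harmless but unneeded: the paper applies the argument to $(q,a)$ directly, since ``functional'' there only asks for uniqueness of values.
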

\begin{proof}
By trichotomy $\omega\in\rho$ and $\rho$ is closed under successor. So finitely
many elements of $V_\rho$ have ranks below a common element of $\rho$, and
$V_\rho$ is closed under the operations, which raise ranks by a finite
amount. For Replacement let $\psi$ have its free variables below $k+2$, let
$e_0,\dots,e_{k-1}$ and $a$ be in $V_\rho$, with $\psi$ functional on $a$ in
$V_\rho$, and let $q$ be the code. The ranks of $q$ and $a$ are below some
$\nu\in\rho$: the code of $\psi$ has finite rank, and a tuple has rank a finite
amount above its components. The pair $(q,a)$ is functional on $a$, with
values in $V_\rho$. As $\rho$ is not reached from $\nu$, the pair is not
unbounded, so by excluded middle there is $\zeta\in\rho$ such that no value has
rank at least $\zeta$, and $b:=V_\zeta$ is as required.
\end{proof}

\begin{proof}[Proof of Theorem~\ref{thm:main}]
By Theorem~\ref{thm:dich} for this $I$, the two lemmas, and soundness.
\end{proof}

\paragraph{One model (\lean{HG}, \lean{hg\_model}).}
The two alternatives can be merged into a single model whose definition
involves neither a case distinction nor the ordinal $\rho$. Call an
ordinal \emph{good} if it is $0$, a successor, $\omega$, or reachable, and
\emph{hereditarily good} if every ordinal up to it is good; this is the
class $C$ of Lemma~\ref{lem:worldlyrule}. Let $H$ be the class of the
sets whose rank is hereditarily good. If every ordinal is good then $H$
is the class of all sets, and otherwise it is $V_\rho$ for the least
ordinal $\rho$ that is not good, but one does not need to know which.

\begin{theorem}[\lean{hg\_model}]
Assume excluded middle. Then $H$ is a model of $\ZF$.
\end{theorem}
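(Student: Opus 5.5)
The plan is to split by excluded middle on whether every ordinal is good, and to reduce each branch to the two models already built, checking in each case that $H$ coincides extensionally with the model from that branch. Since $H$ is defined by a predicate that respects $\approx$ (rank respects $\approx$, and goodness respects $\approx$ because reachability is defined through $I$, which respects $\approx$), and since each clause of the definition of a model of $\ZF$ is invariant under replacing a class by one with the same members, it suffices to show that $H$ has the same members as the class of all sets, respectively as $V_\rho$.

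\emph{First case: every ordinal is good.} Then every ordinal is hereditarily good, so $H$ contains every set. By Theorem~\ref{thm:dich} (its proof only needs that every ordinal is $0$, a successor, $\omega$ or reachable, that is, that the class $C$ of Lemma~\ref{lem:worldlyrule} is all ordinals), every functional relation on a set has an image. Lemma \lean{V\_model} then makes the class of all sets, and hence $H$, a model of $\ZF$.

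\emph{Second case: some ordinal is not good.} I first obtain a \emph{least} one without choice. Let $\eta_0$ be a non-good ordinal and set $\rho:=\{\mu\in\eta_0+1\mid \mu\text{ is not good}\}$. By Foundation for $\V$, applied through excluded middle, this set has an $\in$-minimal element $\rho$. By linearity of ordinals, $\rho$ is then the least non-good ordinal, so every ordinal below $\rho$ is hereditarily good. Now $\rho$ is not $0$, not a successor, not $\omega$ and not reachable, so Lemma~\ref{lem:vrho} makes $V_\rho$ a model of $\ZF$. It remains to check that $x\in H$ iff $\rk x\in\rho$, i.e.\ $x\in V_\rho$. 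If $\rk x\in\rho$, every $\mu\le\rk x$ is below $\rho$ and hence good. Conversely, if $\rk x$ is hereditarily good, then $\rho\not\le\rk x$, since $\rho$ is not good, and trichotomy gives $\rk x\in\rho$.

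The main obstacle is the bookkeeping that transfers model-hood along extensional equality of classes. Satisfaction and the clauses of \lean{ZFModel} must be shown to depend only on membership in the class, which is easy but has to be done. Extracting the least non-good ordinal is a secondary obstacle: it must go through Separation and Foundation rather than any choice.
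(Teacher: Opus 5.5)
Your proof is correct, but it is organized differently from the paper's.

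\textbf{Your route.} You decide globally, by excluded middle, whether every ordinal is good. You then identify $H$ extensionally with the model of the corresponding branch:
\begin{itemize}
\item the class of all sets, via Theorem~\ref{thm:dich} and Lemma \lean{V\_model}; or
\item $V_\rho$ for the least non-good ordinal $\rho$, obtained by Separation and Foundation, via Lemma~\ref{lem:vrho}.
\end{itemize}
The statement thus becomes a corollary of the main dichotomy plus the invariance of the model clauses under classes with the same members.

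\textbf{The paper's route.} The paper checks the clauses for $H$ directly. Closure under the operations holds because they raise ranks by at most one, and the successor of a hereditarily good ordinal is hereditarily good. For Replacement, it applies Lemma~\ref{lem:worldlyrule} and Theorem~\ref{thm:repl} to the given relation. The ranks of its values already lie in the class $C$ of hereditarily good ordinals, so $C$ never has to be all ordinals. Its only case distinction is local: whether the strict supremum $\theta$ of these ranks is good. If $\theta$ is not good, it plays the role of your $\rho$: $H$ has the same members as $V_\theta$, and Lemma~\ref{lem:vrho} bounds the values (so this case is in fact contradictory).

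\textbf{What each buys.} Your route is shorter given what is already proved. It also makes precise the remark before the theorem that $H$ is either all of $\V$ or $V_\rho$. The paper's route never decides which of the two holds and never minimizes over the non-good ordinals, since the relevant ordinal $\theta$ is produced by the materializing recursion. Its classical step is confined to the goodness of the single ordinal $\theta$. This is the form of the argument that Section~\ref{sec:em} reuses in the stable reading, where being good is a stable proposition.
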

\begin{proof}
$H$ is transitive, and it is closed under the set operations because
they raise ranks by at most one and the successor of a hereditarily good
ordinal is hereditarily good. For Replacement let $\psi$, with parameters
in $H$, be functional in $H$ on $a$ in $H$. The ranks of the values are
hereditarily good, so by Lemma~\ref{lem:worldlyrule} and
Theorem~\ref{thm:repl} they form a set, and their strict supremum $\theta$
exists; every ordinal below $\theta$ is hereditarily good. If $\theta$ is good,
then $V_\theta$ is in $H$ and contains the values. If $\theta$ were not good,
then, the hereditarily good ordinals being closed downwards, they would
be exactly the elements of $\theta$; so $H$ would be $V_\theta$, satisfaction in
$H$ would be satisfaction in $V_\theta$, and Lemma~\ref{lem:vrho} would
bound the values inside $V_\theta$. (In fact this case is contradictory: the
values would have ranks cofinal in $\theta$ and also bounded below it.)
\end{proof}
\section{The formalization}\label{sec:lean}

The development is a Lean~4 package with no dependencies. We did not
use Mathlib \cite{mathlib}, which has the sets-as-trees construction,
because its proofs use the axiom of choice freely and Lean's report of
the axioms used by a theorem would then say nothing. The files, in order
of dependency:
\begin{center}
\begin{tabular}{lrl}
\lean{PSet} & 146 & $\V$, $\approx$, $\in$, $\in$-induction, the basic operations, guards\\
\lean{Mat} & 149 & labels, paths, the recursion $F$, coherence, Lemma~\ref{lem:mat}\\
\lean{Repl} & 113 & the glued assignment, Theorem~\ref{thm:repl}\\
\lean{Rule} & 100 & Lemma~\ref{lem:rule}\\
\lean{Pair}, \lean{Ord}, \lean{VLevel} & 316 & pairs, ordinals, rank, power set, $V_x$, $\omega$, $D$\\
\lean{Worldly} & 291 & the rule, Lemma~\ref{lem:worldlyrule}, Theorem~\ref{thm:dich}\\
\lean{Fml}, \lean{Proof} & 283 & formulas, satisfaction, codes, the proof system, soundness\\
\lean{ZF}, \lean{ZFRead} & 275 & the axioms, models, readback of the axioms\\
\lean{FirstOrder} & 263 & $I$, Lemma~\ref{lem:vrho}, Theorem~\ref{thm:main}\\
\lean{Uniform} & 131 & the single model $H$\\
\lean{Stable}, \lean{Negative} & 123 & $\neg\neg\EM$ suffices, Section~\ref{sec:em}
\end{tabular}
\end{center}
Excluded middle is never an axiom. It is a hypothesis
\lean{(em :\ $\forall$ p :\ Prop, p $\vee$ $\neg$p)} of the theorems that use it, which are:
trichotomy of ordinals and what depends on it (the characterization of
$V_x$ by rank, the least $\nu$), the case distinctions in
Lemma~\ref{lem:worldlyrule} and Theorem~\ref{thm:dich}, ``not unbounded
implies bounded'' in Lemma~\ref{lem:vrho}, and the classical connectives
in the semantics of formulas. Lemma~\ref{lem:mat},
Theorem~\ref{thm:repl} and Lemma~\ref{lem:rule} do not use it.

Lean reports that \lean{con\_ZF} depends on no axioms, and the source
asserts this with \lean{\#guard\_msgs}. Propositional extensionality is easy
to use by accident in Lean, and we record the three ways it entered: rewriting
with an equivalence (\lean{rw} with an \lean{Iff}); case analysis on an
equation between two distinct numerals other than $0$; and the library
lemmas about \lean{max} on natural numbers. We avoided them by
composing equivalences explicitly, by \lean{decide}, and by a
three-line definition of the maximum.

The universe level of $\V$ is a parameter throughout, and the main
theorem is proved at level $0$, where $\V:\Type_1$. The recursion $F$ is
an application of \lean{Acc.rec} with carrier the type of paths and
motive the constant $\V$, both in $\Type_1$.

\section{How much excluded middle is needed}\label{sec:em}

Excluded middle is the only hypothesis of Theorem~\ref{thm:main}. We
report what we know about removing it. The results in this section that
are marked as formalized are proved in Lean with no axioms; some of them
are on the branch \lean{no-em} of the repository, which carries out the
whole construction over the stable relations described below, and one
is due to Cameron Zwarich \cite{zwarich26}.

\paragraph{Double negation suffices.}
$\Con(\ZF)$ is a negation, so it follows from $\neg\neg\EM$, the double
negation of excluded middle. This is equivalent to the double negation
shift $(\forall x.\,\neg\neg A\,x)\to\neg\neg\forall x.\,A\,x$ for predicates on
arbitrary types (\lean{con\_ZF\_of\_not\_not\_em}, \lean{con\_ZF\_of\_dns}).
The question is therefore whether the type theory proves $\Con(\ZF)$
with no hypothesis at all.

\paragraph{The stable reading of sets.}
Read bisimulation and membership with $\neg\neg\exists$ in place of $\exists$ at
every level of the trees:
\[x\approx y:\iff\forall i\,\neg\neg\exists j.\ x_i\approx y_j\ \wedge\ \forall j\,\neg\neg\exists i.\ x_i\approx y_j,
\qquad x\in y:\iff\neg\neg\exists j.\ x\approx y_j.\]
This is the device of the negative interpretations of $\ZF$ in
intuitionistic set theory \cite{friedman73,powell75}, applied to the
trees.
Every statement built from these by $\forall$, $\to$, $\wedge$ and $\neg$ is then
\emph{stable}, that is, it follows from its double negation, and a stable
goal may be proved classically: a case distinction on any proposition,
and the use of a doubly negated hypothesis as if it held, are available
without excluded middle. Extensionality holds, and $\in$-induction holds
for stable predicates. On the branch \lean{no-em} the whole construction
is redone in this reading, and every use of excluded middle except one
disappears.
\begin{itemize}
\item The theory of ordinals goes through, with trichotomy in the form
  $\neg\neg(a\in b\vee a\approx b\vee b\in a)$, and so do rank, the levels $V_x$
  and their characterization by rank.
\item Satisfaction of formulas built from $\bot$, $\to$ and $\forall$ is stable,
  so double negation elimination is sound and the derived connectives
  have their classical reading.
\item The definability rule needs no case distinction between zero,
  successor and limit ordinals. An ordinal is \emph{good} if it is
  reachable as soon as it has an element. The label set is
  $D(G)=V_{\max(\rk G,\omega)+\omega}$, which contains $G$ and $\omega$ and is closed
  under pairs, and ``reached from $\nu$'' asks that the code and the domain
  of the function be elements of $D(\nu)$, so that no rank arithmetic is
  needed. That successors and $\omega$ are reachable is then a fact about
  first-order definability (a constant function, the identity on $\omega$),
  not a case of the rule.
\item There is a single model, the class $H$ of the sets of hereditarily
  good rank, as in Section~\ref{sec:zf}. In the Replacement step, that the
  supremum $\theta$ of the ranks of the values is good is still proved by
  contradiction: otherwise $H$ would be $V_\theta$, and the function itself
  would reach $\theta$. Since being good is stable, this costs nothing here.
\end{itemize}
The materialization lemma and Theorem~\ref{thm:repl} were constructive to
begin with.

\paragraph{What is left: accessibility of membership.}
Accessibility is not a stable proposition, and it is needed as such,
because the bounding set is the value of the recursion at the root and
the recursion consumes an accessibility proof. In the stable reading
the proof of Lemma~\ref{lem:mat} gives the value of the recursion for
any accessibility proof, but no longer the accessibility itself. What
remains is isolated as follows. Say that a target assignment has the
\emph{descent condition} if the target of a child is an element of the
target of its parent.
\begin{enumerate}
\item The type of paths is needed as the carrier of the recursion,
  because a step must produce its children as terms and the targets are
  not terms; but it is not needed for the well-foundedness. If $\tau$ has
  the descent condition and $p$ has the target $t$, then $p$ is accessible
  as soon as $t$ is accessible for $\in$ (\lean{acc\_of\_desc}).
\item Let \lean{AccHyp} say that the root of every target assignment with
  the descent condition is not not accessible. Then
  \lean{con\_ZF (hacc :\ AccHyp) :\ Con ZF}, with no axioms.
\item \lean{AccHyp} is equivalent to the statement that membership is not
  not well-founded, $\neg\neg\forall x{:}\V.\ \Acc_\in x$ (\lean{accHyp\_iff\_mem\_wf}):
  for the converse, apply it to the assignment whose paths are the
  descending $\in$-chains of sets.
\end{enumerate}
So, over the type theory with no axioms,
\[\neg\neg\EM\ \Longrightarrow\ \neg\neg\forall x{:}\V.\ \Acc_\in x\ \Longrightarrow\ \Con(\ZF),\]
where $\in$ is the stable membership, and the middle statement is
$\in$-induction for predicates that need not be stable. Structural
induction on trees gives accessibility of every $x_j$, and $\Acc$ respects
$\approx$; but $y\in x$ only says that it is not the case that no $j$ has
$y\approx x_j$, so the step gives $\neg\neg\Acc_\in y$ and not $\Acc_\in y$.

\paragraph{Markov's principle.}
Cameron Zwarich observed \cite{zwarich26} that accessibility of
membership has logical strength of its own: if $\omega$ is accessible for
the stable membership, Markov's principle holds,
\[\bigl(\forall n.\,P\,n\vee\neg P\,n\bigr)\to\neg\neg\exists n.\,P\,n\to\exists n.\,P\,n,\]
with the witness even in $\Type$ when $P$ has a decision procedure. The
argument is the mechanism of this paper in miniature. For $k\in\N$ let
\[s_k:=\textstyle\bigcup_{d\in\N}\guard{d}{\text{$d$ is the least $e$ with $P\,(k+e)$}},\]
a term, which is bisimilar to the remaining length of the search when it
exists. Given $\neg\neg\exists n.\,P\,n$, the set $s_0$ is an element of $\omega$, and if
$P\,k$ fails then $s_{k+1}\in s_k$. A recursion on the accessibility of
$s_0$ that decides $P\,k$ at each step then finds the witness. The guard
lets a term denote a number that is only classically determined, and
honest accessibility runs an unbounded search along it. Consequently
\lean{AccHyp} implies the double negation of Markov's principle for all
decidable predicates on $\N$ at once, and the double negation shift for
families of decidable searches indexed by any type.

This locates the hypothesis between two known principles:
\[\neg\neg\EM\ \Longrightarrow\ \lean{AccHyp}\ \Longrightarrow\ \neg\neg\,\mathrm{MP},\]
where $\mathrm{MP}$ is Markov's principle for all decidable predicates. Markov's
principle is not provable in Martin-L\"of type theory
\cite{coquandmannaa16}, and we expect, without having checked it, that
the same holds for Lean's core; in that case the honest statement
$\forall x.\,\Acc_\in x$ is not provable. Whether the doubly negated form, or
$\Con(\ZF)$ itself, is provable with no hypothesis is open. A proof of the
latter would have to avoid accessibility of membership, or use a
different source of large elimination.

\paragraph{Positive membership.}
With the membership of Section~\ref{sec:sets}, with a positive existential, $\in$ is well-founded outright by structural induction, and the
descent condition gives accessibility of the root directly. The uses of
excluded middle are then elsewhere, and two of them are essential to the
construction as it stands.
\begin{itemize}
\item The target of the child $\la$ is the \emph{least} ordinal from
  which the node's target is reached, a minimization over an undecidable
  property. Its role is to provide data from which the labels of the
  other children are enumerated, and its target must be an element of
  the node's target, determined by the node's target alone. The
  supremum of all such ordinals is the node's target itself, and a
  relation with several targets per path breaks the accessibility
  argument, since the target of a child descends from a different
  target of the parent than the one the induction is about. Taking an
  honest subset instead of a member leads to chains whose accessibility
  is again an unbounded search. We expect that the minimization can be
  avoided by growing the pool of labels in $\omega$ rounds, closing under
  the ordinals definable from the current pool, but we have not checked
  this.
\item That the supremum $\theta$ of the ranks of the values is good is
  proved by contradiction. The function at hand is definable over the
  model, while reachability asks for one definable over $V_\theta$; they
  agree when the model is $V_\theta$, which is the case distinction. Making
  reachability refer to the model itself is circular, and satisfaction
  in a class is not monotone in the class, so there is no fixed point to
  take. A positive inductive definition of the model gives several
  targets per path, and then the tree is not even classically
  well-founded: the ordinal $m$ is the supremum of the constant function
  with value $m-1$ on a one-element domain, so a single label admits every
  finite ordinal as a target. The classical proof avoids a reflection
  argument here, and reflection is what Replacement provides. For formulas
  whose quantifiers are bounded (with the power set as a primitive),
  whose meaning does not depend on the model, we expect the
  corresponding instances of Replacement to have a positive proof.
\end{itemize}
One could hope to build, with positive membership, a model of an
intuitionistic set theory, and to obtain $\Con(\ZF)$ from the known
reduction of $\ZF$ to it, just as we defer the construction of $L$ for
$\ZFC$ to the literature. Intuitionistic set theory comes in two forms,
which differ in how Replacement is stated. $\mathrm{IZF}$ has the schema of
Collection: if $\forall x{\in}a\,\exists y\,\varphi(x,y)$, some set contains a witness for
each $x\in a$. $\mathrm{IZF}_R$ has the schema of Replacement, for functional
$\varphi$ only. Classically the two are equivalent; intuitionistically
$\mathrm{IZF}_R$ does not prove Collection \cite{friedmanscedrov85}. Friedman's translation
\cite{friedman73} gives $\Con(\mathrm{IZF})\to\Con(\ZF)$ for the form with
Collection. This does not help with the two obstacles above: both already
arise in proving Replacement for a functional relation, so we do not know
how to build a model even of $\mathrm{IZF}_R$ with positive membership. And
even if they were overcome, the materializing recursion could only
produce $\mathrm{IZF}_R$, not $\mathrm{IZF}$. It needs the target of each child to be
determined by the node, and for a functional relation the target is the
rank of the value. For a relation that is not functional the only
canonical choice would be the least rank of a witness, which is again a
minimization. $\Con(\ZF)$ would then need a reduction of $\ZF$ to
$\mathrm{IZF}_R$. Whether $\mathrm{IZF}_R$ and $\mathrm{IZF}$ have the same strength is a
longstanding open problem, and it has been conjectured that $\mathrm{IZF}$
proves the consistency of $\mathrm{IZF}_R$ \cite{rathjen05,moczydlowski08}. If
so, then by the second incompleteness theorem $\mathrm{IZF}$ does not prove
$\Con(\mathrm{IZF}_R)\to\Con(\ZF)$, and the reduction would have to use more
than $\mathrm{IZF}$.

\begin{question}
Does the type theory with no axioms prove $\neg\neg\forall x{:}\V.\ \Acc_\in x$ for the
stable membership? Does it prove $\Con(\ZF)$?
\end{question}

\section{Discussion}\label{sec:discussion}

\paragraph{What the first alternative says.}
In the standard model of the type theory \cite{werner97,leewerner11}, where $\Type_0$ is
$V_\kappa$ for an inaccessible $\kappa$ and propositions are truth values,
$\V$ is $V_\kappa$ and the first alternative of
Theorem~\ref{thm:main} holds, as does the second. The first alternative
is Replacement for arbitrary propositional relations, the second-order
form, so where it holds the height of $\V$ is regular with respect to
everything the type theory can express. We do not know whether the type
theory proves it outright. The proof gives it from a weaker
hypothesis than the one we used: it suffices that for one relation $I$,
which may be any proposition and need not be first-order satisfaction,
every limit ordinal above $\omega$ is reachable.

\paragraph{More universes.}
With a third universe there is a type $\V_1$ of trees with index types
in $\Type_1$, which contains a set $\kappa_0$ of the ranks of the elements
of $\V$. We expect that the recursion of Section~\ref{sec:mat}, run on
paths of $\V$ with a relation that mentions a parameter from $\V_1$,
shows that $\kappa_0$ is a regular cardinal of $\V_1$, hence inaccessible
there, and so on up the hierarchy, which would give $\ZF$ with $n$
inaccessible cardinals from $n+2$ universes, matching the strength known
with choice \cite{werner97,carneiro19,kirstsmolka19}. This is not checked.

\begin{question}
Is the type theory with $\omega$ universes and excluded middle, without
choice, equiconsistent with the same theory with choice, that is, with
$\ZFC$ with $n$ inaccessible cardinals for every $n$?
\end{question}

\paragraph{Small carriers.}
If $X$ is a type in $\Type_0$ and $R$ a relation on it, the accessible
part of $R$ is a well-founded relation on a small type. Its height $\alpha$
injects into the power set of $X$, by sending $\xi<\alpha$ to the set of
elements of height $\xi$, so it is below the Hartogs number of
$X\to\Prop$; and the Hartogs number of a small type $Y$ is an ordinal of
$\V$ given by a term (it is the set of the order types of the
well-orderings of subsets of $Y$, each of which is computed by recursion
on its own accessibility, on a small carrier). So a recursion
on $\Acc_R$ explores a tree that is bounded in advance, and the argument
of this paper has no analogue. The restriction is not artificial: the
well-founded recursion used in ordinary mathematics and programming is
mostly on small carriers.

\begin{question}\label{q:small}
Does $\ZF$ prove the consistency of the type theory with excluded
middle, without choice, in which $\Acc$ is restricted to carriers in
$\Type_0$?
\end{question}

\paragraph{Other systems.}
The argument uses nothing specific to Lean: an impredicative $\Prop$,
inductive types, and large elimination for accessibility are also
present in Rocq, where the sets-as-trees have been
studied extensively \cite{werner97,barras10}. We expect the development
to port directly, with excluded middle as the only assumption.

\paragraph{Acknowledgements.}
The author thanks Lo\"{\i}c Pujet and Cameron Zwarich for our discussions
on the consistency strength of $\CIC$ and its fragments, which prompted
this work.

\paragraph{AI disclosure.}
This work was carried out in collaboration with Anthropic's Claude
models (Claude Fable~5.1 and Claude Opus~5), used through Claude Code.
The research direction, the questions pursued, and the decisions about
what to prove and how to state it were the author's; several of the
constructions were proposed by the model in the course of the work,
among them the materializing recursion of Section~\ref{sec:mat} and the
definability rule of Section~\ref{sec:rule}, while others came from the
author, among them the single model of Section~\ref{sec:zf} and the
reduction of the remaining hypothesis to the accessibility of membership
in Section~\ref{sec:em}. The Lean development was written largely by
the model, with revisions by the author, and the text of this paper was
drafted by the model and revised by the author.

The main result does not rest on the model's reasoning: it is checked
by Lean, with the axioms used reported by the kernel. Several earlier
arguments produced during the work turned out to be wrong, and one gap
in a pen-and-paper proof was found only by formalizing it. The claims
of Sections~\ref{sec:em} and~\ref{sec:discussion} that are not
formalized, and the statements about the literature, have not been
checked in the same way; they are marked as expectations where we have
not verified them.

\bibliographystyle{plainurl}
\bibliography{references}

\begin{thebibliography}{10}

\bibitem{aczel78}
Peter Aczel.
\newblock The type theoretic interpretation of constructive set theory.
\newblock In Angus Macintyre, Leszek Pacholski, and Jeff Paris, editors, {\em Logic Colloquium '77}, volume~96 of {\em Studies in Logic and the Foundations of Mathematics}, pages 55--66. North-Holland, 1978.
\newblock \href {https://doi.org/10.1016/S0049-237X(08)71989-X} {\path{doi:10.1016/S0049-237X(08)71989-X}}.

\bibitem{barras10}
Bruno Barras.
\newblock Sets in {Coq}, {Coq} in sets.
\newblock {\em Journal of Formalized Reasoning}, 3(1):29--48, 2010.
\newblock \href {https://doi.org/10.6092/issn.1972-5787/1695} {\path{doi:10.6092/issn.1972-5787/1695}}.

\bibitem{carneiro19}
Mario Carneiro.
\newblock The type theory of {Lean}.
\newblock Master's thesis, Carnegie Mellon University, 2019.
\newblock URL: \url{https://github.com/digama0/lean-type-theory/releases/download/v1.0/main.pdf}.

\bibitem{coquandmannaa16}
Thierry Coquand and Bassel Mannaa.
\newblock The independence of {M}arkov's principle in type theory.
\newblock In {\em 1st International Conference on Formal Structures for Computation and Deduction (FSCD 2016)}, volume~52 of {\em LIPIcs}, pages 17:1--17:18. Schloss Dagstuhl--Leibniz-Zentrum f{\"u}r Informatik, 2016.
\newblock \href {https://doi.org/10.4230/LIPIcs.FSCD.2016.17} {\path{doi:10.4230/LIPIcs.FSCD.2016.17}}.

\bibitem{lean4}
Leonardo de~Moura and Sebastian Ullrich.
\newblock The {Lean} 4 theorem prover and programming language.
\newblock In {\em Automated Deduction (CADE 28)}, volume 12699 of {\em LNCS}, pages 625--635. Springer, 2021.
\newblock \href {https://doi.org/10.1007/978-3-030-79876-5_37} {\path{doi:10.1007/978-3-030-79876-5_37}}.

\bibitem{feferman65}
Solomon Feferman.
\newblock Some applications of the notions of forcing and generic sets.
\newblock {\em Fundamenta Mathematicae}, 56(3):325--345, 1965.
\newblock \href {https://doi.org/10.4064/fm-56-3-325-345} {\path{doi:10.4064/fm-56-3-325-345}}.

\bibitem{friedman73}
Harvey Friedman.
\newblock The consistency of classical set theory relative to a set theory with intuitionistic logic.
\newblock {\em Journal of Symbolic Logic}, 38(2):315--319, 1973.
\newblock \href {https://doi.org/10.2307/2272068} {\path{doi:10.2307/2272068}}.

\bibitem{friedmanscedrov85}
Harvey Friedman and Andre {\v{S}}{\v{c}}edrov.
\newblock The lack of definable witnesses and provably recursive functions in intuitionistic set theories.
\newblock {\em Advances in Mathematics}, 57(1):1--13, 1985.
\newblock \href {https://doi.org/10.1016/0001-8708(85)90103-3} {\path{doi:10.1016/0001-8708(85)90103-3}}.

\bibitem{griffor94}
Edward Griffor and Michael Rathjen.
\newblock The strength of some {M}artin-{L}{\"o}f type theories.
\newblock {\em Archive for Mathematical Logic}, 33(5):347--385, 1994.
\newblock \href {https://doi.org/10.1007/BF01278464} {\path{doi:10.1007/BF01278464}}.

\bibitem{gylterud18}
H{\aa}kon~Robbestad Gylterud.
\newblock From multisets to sets in homotopy type theory.
\newblock {\em Journal of Symbolic Logic}, 83(3):1132--1146, 2018.
\newblock \href {https://doi.org/10.1017/jsl.2017.84} {\path{doi:10.1017/jsl.2017.84}}.

\bibitem{kirstsmolka18}
Dominik Kirst and Gert Smolka.
\newblock Large model constructions for second-order {ZF} in dependent type theory.
\newblock In {\em Certified Programs and Proofs (CPP 2018)}, pages 228--239. ACM, 2018.
\newblock \href {https://doi.org/10.1145/3167095} {\path{doi:10.1145/3167095}}.

\bibitem{kirstsmolka19}
Dominik Kirst and Gert Smolka.
\newblock Categoricity results and large model constructions for second-order {ZF} in dependent type theory.
\newblock {\em Journal of Automated Reasoning}, 63(2):415--438, 2019.
\newblock \href {https://doi.org/10.1007/s10817-018-9480-6} {\path{doi:10.1007/s10817-018-9480-6}}.

\bibitem{leewerner11}
Gyesik Lee and Benjamin Werner.
\newblock Proof-irrelevant model of {CC} with predicative induction and judgmental equality.
\newblock {\em Logical Methods in Computer Science}, 7(4), 2011.
\newblock \href {https://doi.org/10.2168/LMCS-7(4:5)2011} {\path{doi:10.2168/LMCS-7(4:5)2011}}.

\bibitem{moczydlowski08}
Wojciech Moczyd{\l}owski.
\newblock Normalization of {IZF} with {R}eplacement.
\newblock {\em Logical Methods in Computer Science}, 4(2), 2008.
\newblock \href {https://doi.org/10.2168/LMCS-4(2:1)2008} {\path{doi:10.2168/LMCS-4(2:1)2008}}.

\bibitem{powell75}
William~C. Powell.
\newblock Extending {G}{\"o}del's negative interpretation to {ZF}.
\newblock {\em Journal of Symbolic Logic}, 40(2):221--229, 1975.
\newblock \href {https://doi.org/10.2307/2271902} {\path{doi:10.2307/2271902}}.

\bibitem{rathjen05}
Michael Rathjen.
\newblock Replacement versus collection and related topics in constructive {Z}ermelo--{F}raenkel set theory.
\newblock {\em Annals of Pure and Applied Logic}, 136(1--2):156--174, 2005.
\newblock \href {https://doi.org/10.1016/j.apal.2005.05.010} {\path{doi:10.1016/j.apal.2005.05.010}}.

\bibitem{rathjen12}
Michael Rathjen.
\newblock Constructive {Z}ermelo--{F}raenkel set theory, power set, and the calculus of constructions.
\newblock In Peter Dybjer, Sten Lindstr{\"o}m, Erik Palmgren, and G{\"o}ran Sundholm, editors, {\em Epistemology versus Ontology: Essays on the Philosophy and Foundations of Mathematics in Honour of Per {M}artin-{L}{\"o}f}, volume~27 of {\em Logic, Epistemology, and the Unity of Science}, pages 313--349. Springer, 2012.
\newblock \href {https://doi.org/10.1007/978-94-007-4435-6_15} {\path{doi:10.1007/978-94-007-4435-6_15}}.

\bibitem{mathlib}
{The mathlib Community}.
\newblock The {Lean} mathematical library.
\newblock In {\em Certified Programs and Proofs (CPP 2020)}, pages 367--381. ACM, 2020.
\newblock \href {https://doi.org/10.1145/3372885.3373824} {\path{doi:10.1145/3372885.3373824}}.

\bibitem{constructiveepsilon}
{The Rocq Development Team}.
\newblock {ConstructiveEpsilon}: constructive choice from an existence proof over the natural numbers.
\newblock Module \texttt{Stdlib.Logic.ConstructiveEpsilon} of the Rocq standard library, \url{https://github.com/rocq-prover/stdlib}.
\newblock Contributed by Yevgeniy Makarov and Jean-Fran\c{c}ois Monin.

\bibitem{hottbook}
{The Univalent Foundations Program}.
\newblock {\em Homotopy Type Theory: Univalent Foundations of Mathematics}.
\newblock Institute for Advanced Study, 2013.
\newblock URL: \url{https://homotopytypetheory.org/book}.

\bibitem{werner97}
Benjamin Werner.
\newblock Sets in types, types in sets.
\newblock In {\em Theoretical Aspects of Computer Software (TACS 1997)}, volume 1281 of {\em LNCS}, pages 530--546. Springer, 1997.
\newblock \href {https://doi.org/10.1007/BFb0014566} {\path{doi:10.1007/BFb0014566}}.

\bibitem{zwarich26}
Cameron Zwarich.
\newblock Accessibility of membership implies {M}arkov's principle.
\newblock Lean formalization, file \texttt{ConZF/Markov.lean} on the branch \texttt{no-em-markov} of \url{https://github.com/zwarich/ConZF}, 2026.

\end{thebibliography}

\end{document}